\newtheorem{thm}{Theorem}[section] 
\newtheorem{rem}[thm]{Remark}
\newtheorem{lem}[thm]{Lemma}
\def\tsc#1{\csdef{#1}{\textsc{\lowercase{#1}}\xspace}}
\begin{document}
\let\WriteBookmarks\relax
\def\floatpagepagefraction{1}
\def\textpagefraction{.001}
\shorttitle{}
\shortauthors{Zhanpeng Deng et~al.}

\title [mode = title]{Radon random sampling and  reconstruction in local shift-invariant signal space}                      



\author[1]{Zhanpeng Deng}[type=editor,
                        auid=000,bioid=1]
\ead{dengzhp9@mail2.sysu.edu.cn}
\address[1]{School of Mathematics, Sun Yat-sen University, 510275 Guangzhou, China}

\author[1]{Jiao Li}

\ead{lijiao36@mail2.sysu.edu.cn}


\author%
[1,2]
{Jun Xian}[orcid=https://orcid.org/0000-0003-3772-5622]
\cormark[1]
\ead{xianjun@mail.sysu.edu.cn}

\address[2]{Guangdong Province Key Laboratory of Computational Science, 510275 Guangzhou, China}

\cortext[cor1]{Corresponding author}


\begin{abstract}
In this paper, we deal with the problem of reconstruction from Radon random samples in local shift-invariant signal space. Different from sampling after Radon transform, we consider sampling before Radon transform, where the sample set is randomly selected from a square domain with a general probability distribution. First, we prove that the sampling set is stable with high probability under a sufficiently large sample size. Second, we address the problem of signal reconstruction in two-dimensional computed tomography. We demonstrate that the sample values used for this reconstruction process can be determined completely from its Radon transform data. Consequently, we develop an explicit formula to reconstruct the signal using Radon random samples.
\end{abstract}
\begin{keywords}
Radon transform \sep random sampling \sep shift-invariant signal space \sep reconstruction formula
\end{keywords}
\maketitle
\section{Introduction}
\noindent The Radon transform was proposed by Johann Radon in 1917. For a function $f:\mathbb{R}^2 \to \mathbb{R}$, $\mathbf{x}\in\mathbb{R}^2$ and a direction vector $\mathbf{p}=\left(\cos\theta,\sin\theta\right)$ with $\theta\in\left[0,2\pi\right)$, its Radon transform at $t\in \mathbb{R}$ is obtained by integrating along the line $\mathbf{x}=(x,y)=t\mathbf{p}+s(-\sin\theta,\cos\theta)$,
\begin{equation}
    \mathcal{R}_\mathbf{p}\left(f\left(\mathbf{x}\right)\right)\left(t\right)=\mathcal{R}_\mathbf{p} f\left(t\right)=\int_{-\infty}^{+\infty}f\left(t\cos\theta-s\sin\theta,t\sin\theta+s\cos\theta\right)ds\label{1.1}.
\end{equation}
The main idea of this Radon transform is to define a function $f \left( x, y \right)$ to perform higher-dimensional spatial line integrals along any straight line (or hyperplane) in the plane (or space) \cite{2019Recovering}. As shown in Figure \ref{Fig1}, we can see that the two-dimensional Radon transform of $f$ is actually an integral of $f$ along the line which simulates X-ray passing through objects. \\
\begin{figure}[h]
\centering
\includegraphics[scale=.21]{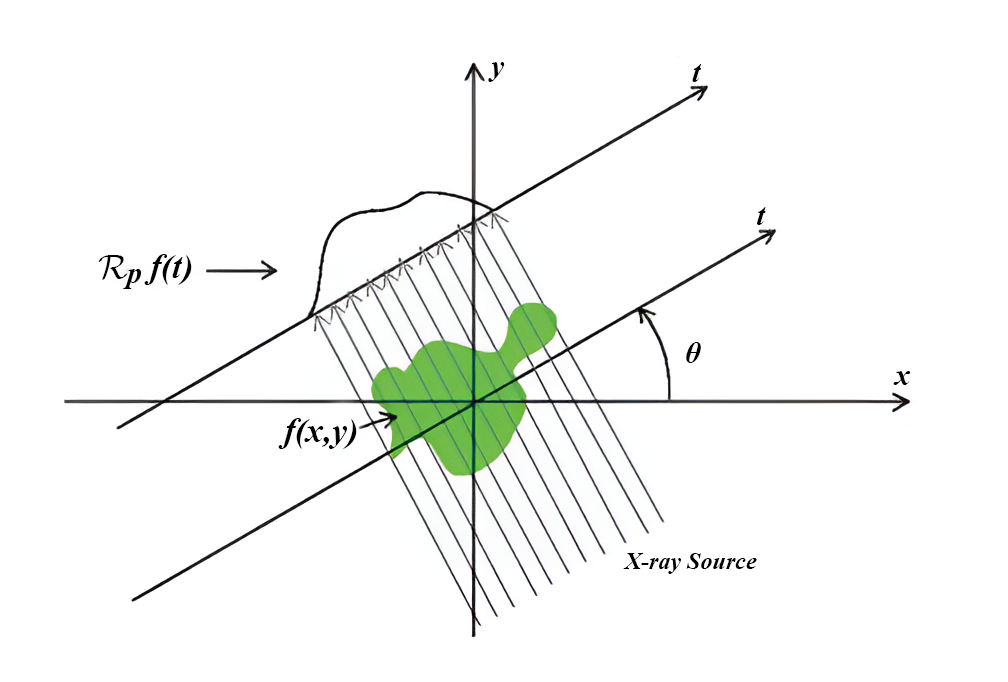}
\caption{Radon transform: $\mathcal{R}_{\mathbf{p}}f\left(t\right)$ is the projection of $f$ along the X-ray at angle $\theta$.}\label{Fig1}
\end{figure}
\indent The Radon transform plays a fundamental role in computed tomography (\textbf{CT}) imaging \cite{appl2017,NSTLAD211C0BE7E08794695A2123086A9B23}. During CT scans, X-rays are used to acquire multi-angle projection data of human tissues, which mathematically correspond to Radon transform projections. The fundamental problem of CT is reconstructing the
 function $f$ by its Radon transform projections \cite{XU2006388}. In \cite{WOS:000178752800002}, the authors developed a Fourier-based algorithm for non-standard sampling in Radon transform reconstruction. The authors in \cite{2023Random1} studied reconstructing measurable functions in locally compact Abelian groups using random measures. In addition to CT imaging, the Radon transform is also applied in fields such as seismology, astronomy and medical diagnostics \cite{2025A, 2024A,1983The}. \\
\indent The sampling problem aims to recover a function $f$ from the sampled values on some sampling set \cite{2010A}. To deal with this problem, we need to specify the signal space. The shift-invariant space, capable of representing spectrally smooth signals and ensuring numerical feasibility, provides a robust framework for modeling biomedical images in \textbf{CT}. The continuous-domain representation of biomedical images can be expressed as functions in this space, thereby enabling the tackling of image reconstruction challenges in \textbf{CT} \cite{Entezari20121532}. While most existing research focuses on global sampling within this space, reconstruction from local samples has often been seen as a highly effective method in numerous signal processing tasks \cite{appl2009}. Let $\mathcal{E}_K=\left[-K,K\right]^2$ and $E=\left[\lceil -N-K\rceil,\lfloor K+N\rfloor\right]^{2}\cap \mathbb{Z}^2$, where $N$ and $K$ are positive numbers. Suppose that $\mathcal{Q}$ is the cardinality of $E$. We denote $E$ by $\{\mathbf{k}_{1},\cdots,\mathbf{k}_{\mathcal{Q}}\}$ and consider the problem of reconstruction from Radon random samples in local shift-invariant signal space
\begin{equation}
\mathcal{S} _{N,K}\left(\varphi\right)=\left\{f:f\left(\mathbf
{x}\right)= \sum_{l=1}^{\mathcal{Q}}{c_{\mathbf{k}_l}}\varphi \left( \mathbf
{x} -\mathbf{k}_l \right),\;\{ c_{\mathbf{k}_l} \} \in \ell ^2\left(\left[ 1,\mathcal{Q} \right]\right),\;\mathbf{x}\in \mathcal{E}_K \right\},\label{1.2}
\end{equation}
 where the generator $\varphi$ with supp$\left(\varphi\right)\subseteq \left[-N,N\right]^2$ is a continuous function with stable shifts. \\
\indent For the above signal space, the key problem lies in determining a sampling strategy that ensures a stable reconstruction of the signal $f$. Typically, uniform and non-uniform sampling are the primary consideration \cite{2023Determination,2013A,2019A}. However, compared to the above methods, random sampling has greater representativeness and operational simplicity. Due to these advantages, random sampling has become a flexible and widely used method. Random sampling has also been extensively applied in compressed sensing, image processing and learning theory \cite{2014Monte,2001On} in recent years.
Extensive research has been conducted on the sampling and reconstruction of various 
 random signals \cite{2019Reconstruction,2023Random,2023AA,2013AA,2021A}.\\
\indent In this paper, we restrict the domain of $ f\in \mathcal{S} _{N,K}\left(\varphi\right)$ on the interval $[-K,K]^{2}$, and then we actually deal with the problem of reconstruction based on Radon random samples in local shift-invariant signal space. Specifically, the sampling set $X=\left\{\mathbf{x}_{j}\right\}_{j=1}^{n}\subseteq \mathcal{E}_K=\left[-K,K\right]^2$ is randomly chosen, where $K$ is a positive number. We consider that $\mathcal{R}_{\mathbf{p}}\left(f\left(\mathbf{x}\right)\right)\left(\mathbf{px}_j\right)$ is the integral of the function $f$ along a line with direction vector $\mathbf{p}=(\cos\theta,\sin\theta)$ passing through $\mathbf{x}_j = \left(x_j,y_j\right) \in X$, $j=1,\ldots,n$, where $\theta\in[0,2\pi)$. Therefore, a straight line in the image space is transformed into a salient pixel in the sinogram \cite{1981Hough}.  By (\ref{1.1}) and $\mathbf{x}_j=\left(x_j,y_j\right)=\left(t_j\cos\theta-s_j\sin\theta,t_j\sin\theta+s_j\cos\theta\right)$, we obtain 
\begin{equation}
    \nonumber
    \begin{pmatrix}
    t_j\\s_j
    \end{pmatrix}
=\begin{pmatrix}
        \cos\theta & \sin\theta\\
        -\sin\theta & \cos\theta\\
\end{pmatrix}
\cdot
\begin{pmatrix}
    x_j\\y_j
\end{pmatrix},
\end{equation}
where $t_j=\mathbf{px}_j$, $j=1,\ldots,n$. We denote $ \mathcal{R}_{\mathbf{p}}f\left(t\right):=\mathcal{R}_{\mathbf{p}}\left(f\left(\mathbf{x}\right)\right)\left(t\right)$ and $\mathcal{R}_{\mathbf{p}}\left(f\left(\mathbf{x}_j\right)\right):=\mathcal{R}_{\mathbf{p}}\left(f\left(\mathbf{x}\right)\right)\left(\mathbf{px}_j\right).$ Then
 we have 
 \begin{equation}
 \mathcal{R}_{\mathbf{p}}f\left(t_j\right)=\mathcal{R}_{\mathbf{p}}\left(f\left(\mathbf{x}\right)\right)\left(t_j\right)=\mathcal{R}_{\mathbf{p}}\left(f\left(\mathbf{x}\right)\right)\left(\mathbf{px}_j\right)=\mathcal{R}_{\mathbf{p}}\left(f\left(\mathbf{x}_{j}\right)\right), \label{new1.4}  
 \end{equation} The available sampling values are in the form of $\left\{ \mathcal{R}_{\mathbf{p}}\left(f\left(\mathbf{x}_{j}\right)\right),\mathbf{x}_{j}\in X \right\}$. The stability of the sampling set is critical for reconstructing $f$, as only Radon samples obtained from a stable set ensure reliable signal recovery \cite{2010A}. For any $f\in\mathcal{S}_{N,K}\left(\varphi\right)$, the stable sampling set $X=\left\{\mathbf{x}_j\right\}_{j=1}^{n}$ is of the form
 \begin{equation}
     c\left\|f\right\|_{L^2\left(\mathcal{E}_K\right)}^2\leqslant\sum_{j=1}^n\left|\mathcal{R}_\mathbf{p}\left(f\left(\mathbf{x}_j\right)\right)\right|^2\leqslant C\left\|f\right\|_{L^2\left(\mathcal{E}_K\right)}^2,\label{1.4}
 \end{equation}
 where $c$ and $C$ are positive constants. As a result of the random sampling process, there is a certain probability that the random sampling set is stable. Then the signal $f\in\mathcal{S}_{N,K}\left(\varphi\right)$ can be recovered via our reconstruction formula. 
 \\
\indent  This paper is organized as follows. In section 2, we first introduce some foundational content for some assumptions, then we establish the sufficient and necessary condition that $f\in \mathcal{S} _{N,K}\left(\varphi\right)$ can
be entirely determined by the sampling set $\left\{\mathcal{R}_{\mathbf{p}}\left(f\left(\mathbf{x}_j\right)\right)\right\}_{j=1}^{n}$. In section 3, we consider the matrix Bernstein inequality which will help us derive the Radon random sampling inequality. In section 4, we establish the primary outcome of reconstruction based on Radon random samples.  In section 5, we perform some numerical tests to verify the effectiveness of the reconstruction formula. In section 6, we conclude the whole paper.    

\section{Preliminary}
\numberwithin{equation}{section}
\noindent In this section, we propose a necessary and sufficient condition under which all functions $f\in \mathcal{S}_{N,K}\left(\varphi\right)$ can be determined completely by their Radon samples at $X=\left\{\mathbf{x}_{j}\right\}_{j=1}^{n}\subseteq \mathcal{E}_K$ in Theorem \ref{thm2.1}. Firstly, we introduce some preliminary knowledge.\\
\indent Throughout the paper, the generator $\varphi$ is continuous and has compact support contained in $\left[-N,N \right]^2$. The signal domain is defined as $\mathbf{x}\in \mathcal{E}_K=\left[-K,K\right]^2$. Then for any signal $f$ in the shift-invariant space generated by $\varphi$, there exists a finite sequence $\{c_{\mathbf{k}_{l}}\in\mathbb{R},\;l=1,\ldots,\mathcal{Q}\}$ such that $f$ can be written as follows
\begin{equation}
    f\left(\mathbf{x}\right)=\sum_{l=1}^{\mathcal{Q}}{c_{\mathbf{k}_{l}}\varphi\left(\mathbf{x}-\mathbf{k}_{l}\right)}, \;\forall\;\mathbf{x}\in \mathcal{E}_K,\label{2.1}
\end{equation}
where 
\begin{equation}
    E:=\left\{\mathbf{k}_{1},\mathbf{k}_{2},\ldots,\mathbf{k}_{\mathcal{Q}}\right\}=\left[\lceil -N-K\rceil,\lfloor N+K\rfloor\right]^{2}\cap \mathbb{Z}^2\label{2.2},
\end{equation} $\mathcal{Q}$ is the cardinality of $E$ and $N$, $K$ are positive numbers. In the whole paper, we assume that $\mathcal{Q}>1$.\\
\indent Our assumptions regarding the generator and the probability density function, as well as their corresponding constants, are outlined below:
\begin{enumerate}
\item [\textbf{(A.1)}] 
The generator $\varphi$ is a continuous function with compact support $\left[-N,N\right]^2$ and has stable shifts, i.e.
\begin{equation}
\nonumber m_{2}\left\|C\right\|_{\ell^2}\leqslant \left\|\sum_{l=1}^{\mathcal{Q}}c_{\mathbf{k}_{l}}\varphi\left(\cdot-\mathbf{k}_{l}\right)\right\|_{L^2\left(\mathcal{E}_K\right)}\leqslant M_{2}\left\|C\right\|_{\ell^2},
\end{equation}
where $0< m_{2}\leqslant M_{2}<\infty$. 
\item [\textbf{(A.2)}] 
Suppose that $\xi$ is a probability density function over $\mathcal{E}_K$ and satisfies the following condition   
\begin{equation}
\nonumber 0< C_{\xi,l}\leqslant\xi\left(\mathbf{x}\right)\leqslant C_{\xi,u} ,\;\forall\; \mathbf{x}\in \mathcal{E}_K.
\end{equation}
\end{enumerate}



        

 Let $X=\left\{\mathbf{x}_{1},\ldots,\mathbf{x}_{n}\right\}\subseteq \mathcal{E}_K$ be the sampling set. To address noise-induced degradation in $f(\mathbf{x}_j)$, we reconstruct 
$f$ via its high-accuracy Radon samples on $X$, and establish a necessary and sufficient condition for exact recovery of any $f\in \mathcal{S} _{N,K}\left(\varphi\right)$ in (\ref{1.2}) from these samples.
\begin{thm}
    Suppose that $\varphi\in L^2\left(\mathbb{R}^2\right)$ satisfying $(\mathbf{A.1})$ and $\left\{\varphi\left(\cdot-\mathbf{k}\right):\mathbf{k}\in\mathbb{Z}^2 \right\}$ is linearly independent. Let the direction vector be $\mathbf{p}=\left(\cos\theta,\sin\theta\right)$ 
such that $\mathcal{R}_\mathbf{p}\left(\varphi\right)$ is continuous. Let $E=\left[\lceil -N-K\rceil,\lfloor K+N\rfloor\right]^{2}\cap \mathbb{Z}^2:=\left\{\mathbf{k}_{1},\ldots,\mathbf{k}_{\mathcal{Q}}\right\}$, $\mathcal{Q}$ be the cardinality of $E$ and\label{thm2.1} 
    \begin{equation}
         \nonumber U_{\varphi,\mathbf{p},X}:=
         \begin{pmatrix}
        \mathcal{R}_\mathbf{p}\left(\varphi\left(\mathbf{x}_{1}-\mathbf{k}_{1}\right)\right) & \mathcal{R}_\mathbf{p} \left(\varphi\left(\mathbf{x}_{1}-\mathbf{k}_{2}\right)\right)&\cdots&\mathcal{R}_\mathbf{p} \left(\varphi\left(\mathbf{x}_{1}-\mathbf{k}_{\mathcal{Q}}\right)\right)\\
         \mathcal{R}_\mathbf{p} \left(\varphi\left(\mathbf{x}_{2}-\mathbf{k}_{1}\right)\right) & \mathcal{R}_\mathbf{p} \left(\varphi\left(\mathbf{x}_{2}-\mathbf{k}_{2}\right)\right)&\cdots&\mathcal{R}_\mathbf{p} \left(\varphi\left(\mathbf{x}_{2}-\mathbf{k}_{\mathcal{Q}}\right)\right)\\
         \vdots &\vdots &\ddots&\vdots\\
          \mathcal{R}_\mathbf{p} \left(\varphi\left(\mathbf{x}_{n}-\mathbf{k}_{1}\right)\right) & \mathcal{R}_\mathbf{p} \left(\varphi\left(\mathbf{x}_{n}-\mathbf{k}_{2}\right)\right)&\cdots&\mathcal{R}_\mathbf{p} \left(\varphi\left(\mathbf{x}_{n}-\mathbf{k}_{\mathcal{Q}}\right)\right)\\
        \end{pmatrix}.
    \end{equation}
    Then for sampling set $X=\left\{\mathbf{x}_{j}\right\}_{j=1}^{n}\subseteq \mathcal{E}_{K}$, $f\in \mathcal{S} _{N,K}\left(\varphi\right)$ can be determined completely by its 
 Radon $\left(w.r.t.\, \mathbf{p}\right)$ samples if and only if the $\mathcal{Q}\times\mathcal{Q}$ matrix $U_{\varphi,\mathbf{p},X}^\mathrm{T} U_{\varphi,\mathbf{p},X}$ is invertible.
\end{thm}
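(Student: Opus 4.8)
The plan is to recast the statement as a purely finite-dimensional linear-algebra question. First I would exploit the linearity of the Radon transform: writing $f(\mathbf{x})=\sum_{l=1}^{\mathcal{Q}}c_{\mathbf{k}_l}\varphi(\mathbf{x}-\mathbf{k}_l)$ as in (\ref{2.1}) and applying $\mathcal{R}_\mathbf{p}$, the hypothesis that $\mathcal{R}_\mathbf{p}(\varphi)$ is continuous (together with $\mathrm{supp}(\varphi)\subseteq[-N,N]^2$) guarantees each entry $\mathcal{R}_\mathbf{p}(\varphi(\mathbf{x}_j-\mathbf{k}_l))=\mathcal{R}_\mathbf{p}(\varphi(\cdot-\mathbf{k}_l))(\mathbf{p}\mathbf{x}_j)$ is a well-defined finite real number, and by linearity
\begin{equation}
\nonumber \mathcal{R}_\mathbf{p}\left(f\left(\mathbf{x}_j\right)\right)=\sum_{l=1}^{\mathcal{Q}}c_{\mathbf{k}_l}\,\mathcal{R}_\mathbf{p}\left(\varphi\left(\mathbf{x}_j-\mathbf{k}_l\right)\right),\qquad j=1,\ldots,n.
\end{equation}
Collecting these into the sample vector $\mathbf{y}=\left(\mathcal{R}_\mathbf{p}\left(f\left(\mathbf{x}_j\right)\right)\right)_{j=1}^{n}$ and the coefficient vector $\mathbf{c}=\left(c_{\mathbf{k}_l}\right)_{l=1}^{\mathcal{Q}}$, this reads exactly $\mathbf{y}=U\,\mathbf{c}$, where I abbreviate $U:=U_{\varphi,\mathbf{p},X}$.

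Next I would reduce the phrase ``$f$ determined completely by its Radon samples'' to injectivity of the map $\mathbf{c}\mapsto U\mathbf{c}$. Because $\{\varphi(\cdot-\mathbf{k})\}$ is linearly independent and $\varphi$ has stable shifts by $(\mathbf{A.1})$, the correspondence $f\leftrightarrow\mathbf{c}$ is a bijection on $\mathcal{S}_{N,K}(\varphi)$; hence two signals share all Radon samples if and only if their coefficient vectors $\mathbf{c}_1,\mathbf{c}_2$ satisfy $U(\mathbf{c}_1-\mathbf{c}_2)=\mathbf{0}$. Thus complete determination is equivalent to $\ker U=\{\mathbf{0}\}$, i.e. to $U$ having full column rank $\mathcal{Q}$.

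The bridge to the stated condition is the standard identity $\ker\left(U^{\mathrm{T}}U\right)=\ker U$: if $U^{\mathrm{T}}U\mathbf{c}=\mathbf{0}$ then $\|U\mathbf{c}\|_{\ell^2}^2=\mathbf{c}^{\mathrm{T}}U^{\mathrm{T}}U\mathbf{c}=0$, forcing $U\mathbf{c}=\mathbf{0}$, while the reverse inclusion is immediate. Since $U^{\mathrm{T}}U$ is a square $\mathcal{Q}\times\mathcal{Q}$ matrix, it is invertible if and only if its kernel is trivial, which by the previous paragraph happens if and only if $f$ is completely determined by its samples, settling both implications at once. For the ``if'' direction this moreover yields the explicit reconstruction $\mathbf{c}=\left(U^{\mathrm{T}}U\right)^{-1}U^{\mathrm{T}}\mathbf{y}$, recovering $f$ through (\ref{2.1}).

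I do not expect a deep obstacle: the argument is essentially the equivalence ``full column rank $\Longleftrightarrow$ Gram matrix invertible,'' dressed up through the Radon transform. The only points requiring genuine care are verifying that the matrix entries are finite (so that $U$ is a bona fide real matrix), which uses continuity of $\mathcal{R}_\mathbf{p}(\varphi)$ and compact support, and invoking linear independence of the shifts so that injectivity of the coefficient-to-sample map is the same as injectivity of the sampling map on $f$ itself. Everything else is routine finite-dimensional linear algebra over $\mathbb{R}^{\mathcal{Q}}$.
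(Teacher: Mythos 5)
Your proposal is correct and follows essentially the same route as the paper: both reduce the problem to the finite linear system $U\mathbf{c}=\mathbf{y}$ via linearity of $\mathcal{R}_\mathbf{p}$, identify $f$ with its coefficient vector through linear independence of the shifts, and characterize recoverability by invertibility of the Gram matrix $U^{\mathrm{T}}U$, with reconstruction $\mathbf{c}=\left(U^{\mathrm{T}}U\right)^{-1}U^{\mathrm{T}}\mathbf{y}$. Your explicit use of the identity $\ker\left(U^{\mathrm{T}}U\right)=\ker U$ in fact supplies the detail that the paper's terse ``only if'' direction leaves implicit, so nothing is missing.
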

\begin{proof}
    $\left(\Leftarrow \right)$ We notice that $U_{\varphi,\mathbf{p},X}^\mathrm{T} U_{\varphi,\mathbf{p},X}$ is invertible, then we can prove that $\left\{\mathcal{R}_\mathbf{p}\left(\varphi\left(\cdot-\mathbf{k}_{l}\right)\right):l=1,\ldots,\mathcal{Q}\right\}$ is linearly independent in $L^2\left(\mathbb{R}\right)$.\\ \indent In fact, assume that it is not linearly independent, there is a nonzero sequence $\left\{\widetilde{c}_{l}\right\}_{l=1}^{\mathcal{Q}}\in \ell^{2} $  satisfying
    \begin{equation}
        \nonumber\left\|\sum_{l=1}^{\mathcal{Q}} \widetilde{c}_{l}\mathcal{R}_\mathbf{p}\left(\varphi\left(\cdot-\mathbf{k}_{l}\right)\right)\right\|_{L^{2}\left(\mathbb{R}\right)}^2=\int_{\mathbb{R}}\left|\sum_{l=1}^{\mathcal{Q}}\widetilde{c}_{l}\mathcal{R}_\mathbf{p}\left(\varphi\left(\mathbf{x}-\mathbf{k}_{l}\right)\right)\left(t\right)\right|^2 dt=0.
    \end{equation}
    Due to $\mathcal{R}_\mathbf{p}\left(\varphi\right)$ is continuous and $t=\mathbf{px}$ in $ \left(\ref{new1.4}\right)$, for any $j\in\left\{1,\ldots,n\right\}$ and $\mathbf{x}_{j}\in X$, we obtain $\sum_{l=1}^{\mathcal{Q}}\widetilde{c}_{l}\mathcal{R}_\mathbf{p}\left(\varphi\left(\mathbf{x}_j-\mathbf{k}_{l}\right)\right)=0$ which implies that the matrix $U_{\varphi,\mathbf{p},X}^\mathrm{T} U_{\varphi,\mathbf{p},X}$ is not invertible. This contradicts with the assumption. 
    \par From (\ref{2.1}), there exists
    $\{c_{\mathbf{k}_{l}},l=1,\ldots,\mathcal{Q}\}\in\mathbb{R}$ such that the equation 
    \begin{equation}
        \mathcal{R}_\mathbf{p}\left(f\left(\mathbf{x}\right)\right)(t)=\sum_{l=1}^{\mathcal{Q}}c_{\mathbf{k}_{l}}\mathcal{R}_\mathbf{p}\left(\varphi\left(\mathbf{x}-\mathbf{k}_{l}\right)\right)(t),\;\forall\; t\in\left[-\sqrt{2}K,\sqrt{2}K\right]\label{2.5}
    \end{equation}
    is true. Next, we solve the finite linear system for the coefficients $\{c_{\mathbf{k}_l}\}_{l=1}^{\mathcal{Q}}$,
    \begin{equation}
        \nonumber\sum_{l=1}^{\mathcal{Q}}c_{\mathbf{k}_l}\mathcal{R}_{\mathbf{p}}\left(\varphi\left(\mathbf{x}_j-\mathbf{k}_{l}\right)\right)=\mathcal{R}_{\mathbf{p}}\left(f\left(\mathbf{x}_j\right)\right),\,j=1,\ldots,n,\,\mathbf{x}_j\in \mathcal{E}_K.
    \end{equation}
    Therefore, we have the matrix form
    \begin{equation}
    \nonumber U_{\varphi,\mathbf{p},X}(c_{\mathbf{k}_{1}},\ldots,c_{\mathbf{k}_{\mathcal{Q}}})^\mathrm{T}=\mathcal{R}_\mathbf{p}\left(f\left(\mathbf{x}_{j}\right)\right)_{j=1,\ldots,n},
    \end{equation}
    then if the matrix $U_{\varphi,\mathbf{p},X}^\mathrm{T} U_{\varphi,\mathbf{p},X}$ is invertible, the coefficients $(c_{\mathbf{k}_{1}},\ldots,c_{\mathbf{k}_{\mathcal{Q}}})^\mathrm{T}$ can be entirely reconstructed from its Radon samples $\left\{\mathcal{R}_\mathbf{p}\left(f\left(\mathbf{x}_{j}\right)\right),j=1,\ldots,n\right\}$ in the following formula
    \begin{equation}
    \nonumber(c_{\mathbf{k}_{1}},\ldots,c_{\mathbf{k}_{\mathcal{Q}}})^\mathrm{T}=\left(U_{\varphi,\mathbf{p},X}^\mathrm{T} U_{\varphi,\mathbf{p},X}\right)^{-1}U_{\varphi,\mathbf{p},X}^{\mathrm{T}}\left(\mathcal{R}_\mathbf{p}\left(f\left(\mathbf{x}_{j}\right)\right)\right)_{j=1,\ldots,n}.
    \end{equation}
    \indent Thus, due to the fact that $\left\{\varphi\left(\cdot-\mathbf{k}\right),\mathbf{k}\in E\right\}$ is linearly independent, $f$ can be determined uniquely by its Radon $\left(w.r.t.\, \mathbf{p}\right)$ samples at $X=\left\{\mathbf{x}_{1},\ldots,\mathbf{x}_{n}\right\}$.\\
    $\left(\Rightarrow \right)$ If matrix $U_{\varphi,\mathbf{p},X}^{\mathrm{T}} U_{\varphi,\mathbf{p},X}$ is not invertible, the coefficients $(c_{\mathbf{k}_{1}},\ldots,c_{\mathbf{k}_{\mathcal{Q}}})^\mathrm{T}$ can not be determined completely. Since $\left\{\varphi\left(\cdot-\mathbf{k}\right),\mathbf{k}\in E\right\}$ is linearly independent, then by $ f=\sum_{l=1}^{\mathcal{Q}}{c_{\mathbf{k}_{l}}\varphi\left(\cdot-\mathbf{k}_{l}\right)}$ in (\ref{2.1}), we know that this contradicts with the fact that $f$ can be determined uniquely.
\end{proof}

\section{Random sampling inequalities for the Radon transform in $\mathcal{S} _{N,K}(\varphi)$}
\noindent  In this section, we consider the random sampling inequalities for the Radon transform in $\mathcal{S} _{N,K}(\varphi)$. First, we need to explain the advantages of the sampling method. \\
\indent The defect classification problem in image processing encompasses multiple stages: image acquisition, pre-processing, segmentation and surface defect identification. By applying our sampling strategy, targeted analysis of specific angles and positions becomes feasible. For example, focusing on critical sampling points (e.g., on aircraft wings) and selected directions allows efficient computation of their corresponding Radon transform projections. Unlike full-image approaches that demand substantial computational resources, our method preserves reconstruction accuracy while significantly reducing computational and storage requirements.\\
\indent We observe that if $f\in \mathcal{S} _{N,K}\left(\varphi\right)$ in (\ref{1.2}) satisfies the stability condition (\ref{1.4}), the normalized function $f/\left\|f\right\|_{L^2\left(\mathcal{E}_K\right)}$ will also satisfy (\ref{1.4}).
Therefore, we define the normalized space as
\begin{equation}
    \widetilde{\mathcal{S}}_{N,K}\left(\varphi\right)=\left\{f\in \mathcal{S} _{N,K}\left(\varphi\right):\left\|f\right\|_{L^2\left(\mathcal{E}_K\right)}=1\right\}\label{3.1}.
\end{equation}
\indent Finally, we establish a series of inequalities essential for Theorem \ref{thm3.4}. 
\begin{lem}
    Let $\widetilde{\mathcal{S}}_{N,K}\left(\varphi\right)$ and $m_{2}$ be defined by (\ref{3.1}) and $(\mathbf{A.1})$ respectively\label{lem3.1}. Then, for any $f\in \widetilde{\mathcal{S}}_{N,K}\left(\varphi\right)$, we have 
    \begin{align}
        \left\|\mathcal{R}_{\mathbf{p}}\left(f\right)\right\|_{L^{\infty}{\left(\mathcal{E}_K\right)}}&\leqslant2KC_{\theta}\frac{C_{\varphi}}{m_{2}},\label{3.2}\\
        \left\|\mathcal{R}_{\mathbf{p}}\left(f\right)\right\|_{L^2\left(\mathcal{E}_K\right)}&\leqslant2KC_{\theta},\label{3.3}
    \end{align}
    where $C_{\varphi}=sup_{\mathbf{x}\in \mathcal{E}_K}\sum_{l=1}^{\mathcal{Q}}{\left|\varphi\left(\mathbf{x}-\mathbf{k}_l\right)\right|}$ and $C_{\theta}=\sin\theta+\cos\theta$ with $\theta\in \left[0,\pi/2\right)$.
\end{lem}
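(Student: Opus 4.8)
The plan is to obtain both estimates from two separate ingredients: a uniform bound on $f$ coming from the stable-shift hypothesis $(\mathbf{A.1})$ together with the normalization $\|f\|_{L^2(\mathcal{E}_K)}=1$, and a bound on the length of the segment over which the Radon integral is effectively computed. I regard $f$ as extended by zero outside $\mathcal{E}_K$, so that, writing $\mathbf{x}(t,s)=t\mathbf{p}+s\mathbf{u}$ with $\mathbf{u}=(-\sin\theta,\cos\theta)$, the value $\mathcal{R}_{\mathbf{p}}f(t)=\int f(\mathbf{x}(t,s))\,ds$ is an integral of $f$ over the chord $I(t)=\{s:\mathbf{x}(t,s)\in\mathcal{E}_K\}$; I write $\ell(t)=|I(t)|$ for its length, and I read $\|\mathcal{R}_{\mathbf{p}}f\|_{L^p(\mathcal{E}_K)}$ as the $L^p$ norm of $\mathbf{x}\mapsto\mathcal{R}_{\mathbf{p}}f(\mathbf{px})$ over $\mathcal{E}_K$.

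First I would record the pointwise bound on $f$. From $f=\sum_{l=1}^{\mathcal{Q}}c_{\mathbf{k}_l}\varphi(\cdot-\mathbf{k}_l)$ and $\max_l|c_{\mathbf{k}_l}|\leqslant\|C\|_{\ell^2}$, the definition of $C_\varphi$ gives $|f(\mathbf{x})|\leqslant\max_l|c_{\mathbf{k}_l}|\sum_{l=1}^{\mathcal{Q}}|\varphi(\mathbf{x}-\mathbf{k}_l)|\leqslant\|C\|_{\ell^2}C_\varphi$ for all $\mathbf{x}\in\mathcal{E}_K$. Since $\|f\|_{L^2(\mathcal{E}_K)}=1$, the lower bound in $(\mathbf{A.1})$ forces $\|C\|_{\ell^2}\leqslant1/m_2$, hence $\|f\|_{L^\infty(\mathcal{E}_K)}\leqslant C_\varphi/m_2$.

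Next I would bound $\ell(t)$. The two endpoints of the chord lie in $[-K,K]^2$ and differ by the vector $\ell(t)\mathbf{u}=\ell(t)(-\sin\theta,\cos\theta)$, so comparing coordinates gives $\ell(t)\sin\theta\leqslant2K$ and $\ell(t)\cos\theta\leqslant2K$. Multiplying the first by $\sin\theta\geqslant0$ and the second by $\cos\theta\geqslant0$, adding, and using $\sin^2\theta+\cos^2\theta=1$ yields $\ell(t)\leqslant2K(\sin\theta+\cos\theta)=2KC_\theta$; this is exactly where the restriction $\theta\in[0,\pi/2)$ is used, since it makes both trigonometric factors nonnegative. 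Estimate (3.2) is then immediate: $|\mathcal{R}_{\mathbf{p}}f(t)|\leqslant\int_{I(t)}|f|\,ds\leqslant\|f\|_{L^\infty(\mathcal{E}_K)}\ell(t)\leqslant2KC_\theta\,C_\varphi/m_2$, uniformly in $t$, hence in $\mathbf{x}\in\mathcal{E}_K$.

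For (3.3) I would pass to the rotated coordinates $(t,s)$ via $\mathbf{x}=t\mathbf{p}+s\mathbf{u}$, whose Jacobian is $1$ and in which $\mathcal{E}_K$ has $s$-section $I(t)$ of length $\ell(t)$, so that $\|\mathcal{R}_{\mathbf{p}}f\|_{L^2(\mathcal{E}_K)}^2=\int|\mathcal{R}_{\mathbf{p}}f(t)|^2\ell(t)\,dt$. Applying Cauchy--Schwarz on the chord in the form $|\mathcal{R}_{\mathbf{p}}f(t)|^2\leqslant\ell(t)\int_{I(t)}|f|^2\,ds$ and then the bound $\ell(t)\leqslant2KC_\theta$ (used twice, once from the section and once from Cauchy--Schwarz), followed by the same measure-preserving change of variables to recombine the iterated integral, gives $\|\mathcal{R}_{\mathbf{p}}f\|_{L^2(\mathcal{E}_K)}^2\leqslant(2KC_\theta)^2\iint_{\mathcal{E}_K}|f|^2\,d\mathbf{x}=(2KC_\theta)^2$, which is (3.3). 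The only genuinely delicate step is the geometric chord-length estimate together with keeping the integration pinned to $I(t)\subseteq\mathcal{E}_K$; everything else is Cauchy--Schwarz and a rotation of variables.
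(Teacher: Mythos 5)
Your proof is correct and takes essentially the same route as the paper's: both pass to the rotated coordinates $(t,s)$ with unit Jacobian, bound the effective length of the $s$-integration by $2KC_\theta$ (the paper by noting $t,s\in[-KC_\theta,KC_\theta]$ for $\theta\in[0,\pi/2)$, you by the slightly sharper chord-endpoint argument), deduce $\left\|f\right\|_{L^\infty(\mathcal{E}_K)}\leqslant C_\varphi/m_2$ from $(\mathbf{A.1})$ and the normalization $\left\|f\right\|_{L^2(\mathcal{E}_K)}=1$, and obtain the $L^2$ bound by Cauchy--Schwarz on the chord plus a second factor of $2KC_\theta$ from the section measure. The only cosmetic differences are that you bound $|f|$ via $\max_l|c_{\mathbf{k}_l}|\leqslant\left\|C\right\|_{\ell^2}$ where the paper uses Cauchy--Schwarz on the coefficient sum, and your geometric derivation of $\ell(t)\leqslant 2KC_\theta$ is more explicit than the paper's range computation; neither changes the substance.
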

\begin{proof}
Let $\mathbf{x}=\left(x,y\right)\in \mathcal{E}_K$, $t=x\cos\theta+y\sin\theta$ and $s=-x \sin\theta+y\cos\theta$. Then, we can calculate the value range of $t$ and $s$. For $\theta\in\left[0,\pi/2\right)$, we obtain
    $x=t\cos\theta-s\sin\theta$ and $y=t\sin\theta+s\cos\theta$. It follows from $\mathbf{x}=\left(x,y\right)\in \mathcal{E}_K$ and $\sin\theta,\,\cos\theta>0$ that
\begin{equation}
    \nonumber t,s\in\left[-K\left(\sin\theta+\cos\theta\right),K\left(\sin\theta+\cos\theta\right)\right].
\end{equation}
    Similarly, we can solve the cases when $\theta$ is in the other three quadrants. We summarize it as follows
    \[
    t,s\in
    \begin{cases}
\left[-K\left(\sin\theta+\cos\theta\right),K\left(\sin\theta+\cos\theta\right)\right],&\theta\in\left[0,\pi/2\right),\\
        \left[-K\left(\sin\theta-\cos\theta\right),K\left(\sin\theta-\cos\theta\right)\right],&\theta\in\left[\pi/2,\pi\right),\\
        \left[-K\left(-\sin\theta-\cos\theta\right),-K\left(\sin\theta+\cos\theta\right)\right],&\theta\in\left[\pi,3\pi/2\right),\\
        \left[-K\left(\cos\theta-\sin\theta\right),K\left(\cos\theta-\sin\theta\right)\right],&\theta\in\left[3\pi/2,2\pi\right).\\
    \end{cases}
    \]
    For the above four cases, the images of the corresponding function are the same on different domains, so we only consider $\theta\in\left[0,\pi/2\right)$.
    For any $f\in \widetilde{\mathcal{S}}_{N,K}\left(\varphi\right)$ in (\ref{3.1}), there exists $C=(c_{\mathbf{k}_1},\ldots,c_{\mathbf{k}_{\mathcal{Q}}})^{\mathrm{T}}$ such that $$f\left(\mathbf{x}\right)=\sum_{l=1}^{\mathcal{Q}}c_{\mathbf{k}_{l}}\varphi\left(\mathbf{x}-\mathbf{k}_{l}\right),\; \forall\;\mathbf{x}\in \mathcal{E}_K.$$ 
    Consequently, for $C_{\theta}=\sin\theta+\cos\theta$ and $ \theta\in \left[0,\pi/2\right)$, the following inequality holds
    \begin{align}
        \nonumber\left\|\mathcal{R}_{\mathbf{p}}\left(f\right)\right\|_{L^{\infty}{\left(\mathcal{E}_K\right)}}&=\underset{t\in[-KC_{\theta},KC_{\theta}]}{sup}\left|\int_{\mathbb{R}}f\left(t\cos\theta-s\sin\theta,t\sin\theta+s\cos\theta\right) ds \right|\\
        \nonumber&\leqslant2KC_{\theta}\left\|f\right\|_{L^{\infty}\left(\mathcal{E}_K\right)}\\
        \nonumber&\leqslant 2KC_{\theta}\underset{\mathbf{x}\in \mathcal{E}_K}{sup}\sum_{l=1}^{\mathcal{Q}}\left|c_{\mathbf{k}_{l}}\varphi\left(\mathbf{x}-\mathbf{k}_{l}\right)\right|\\
        \nonumber&\leqslant 2KC_{\theta}\underset{\mathbf{x}\in \mathcal{E}_K}{sup}\left(\sum_{l=1}^{\mathcal{Q}}\left|c_{\mathbf{k}_l}\right|^2\right)^{\frac{1}{2}}\left(\sum_{l=1}^{\mathcal{Q}}\left|\varphi\left(\mathbf{x}-\mathbf{k}_{l}\right)\right|^2\right)^{\frac{1}{2}}\\
        \nonumber&\leqslant 2KC_{\theta}\frac{\left\|f\right\|_{L^2\left(\mathcal{E}_K\right)}}{m_2}\underset{\mathbf{x}\in \mathcal{E}_K}{sup}\sum_{l=1}^{\mathcal{Q}}\left|\varphi\left(\mathbf{x}-\mathbf{k}_{l}\right)\right|\\
        \nonumber&\leqslant 2KC_{\theta}\frac{C_{\varphi}}{m_2},
    \end{align}
where $f\in\widetilde{\mathcal{S}}_{N,K}\left(\varphi\right)$ in (\ref{3.1}), then $\left\|f\right\|_{L^2\left(\mathcal{E}_K\right)}=1$ and $C_{\varphi}=sup_{\mathbf{x}\in \mathcal{E}_K}\sum_{l=1}^{\mathcal{Q}}{\left|\varphi\left(\mathbf{x}-\mathbf{k}_l\right)\right|}$. Therefore, one has $$\left\|\mathcal{R}_{\mathbf{p}}\left(f\right)\right\|_{L^{\infty}{\left(\mathcal{E}_K\right)}}\leqslant2KC_{\theta}\left\|f\right\|_{L^{\infty}\left(\mathcal{E}_K\right)}\leqslant2KC_{\theta}\frac{C_{\varphi}}{m_{2}}.$$
    \par Next, we estimate $\left\|\mathcal{R}_{\mathbf{p}}\left(f\right)\right\|_{L^2\left(\mathcal{E}_K\right)}$. By $\mathcal{R}_{\mathbf{p}}\left(f\left(\mathbf{x}\right)\right)=\mathcal{R}_\mathbf{p}f\left(t\right)$, we obtain
    \begin{align}
        \left\|\mathcal{R}_{\mathbf{p}}\left(f\right)\right\|_{L^2\left(\mathcal{E}_K\right)}^2\nonumber &=\int_{\mathcal{E}_K}\left|\mathcal{R}_{\mathbf{p}}\left(f\left(\mathbf{x}\right)\right)\right|^2d\mathbf{x}\\
    
        \nonumber&\leqslant2KC_{\theta}\int_{-KC_{\theta}}^{KC_{\theta}}\int_{-KC_{\theta}}^{KC_{\theta}}\left(\int_{\mathbb{R}}\left|f\left(t\cos\theta-s\sin\theta,t\sin\theta+s\cos\theta\right)\right|^2ds\right)dsdt\\
        \nonumber&=2KC_{\theta}\int_{-KC_{\theta}}^{KC_{\theta}}\left[\int_{-KC_{\theta}}^{KC_{\theta}}\int_{\mathbb{R}}\left|f\left(t\cos\theta-s\sin\theta,t\sin\theta+s\cos\theta\right)\right|^2dsdt\right]ds\\
        \nonumber&\leqslant4K^2C_{\theta}^2.
    \end{align}
\end{proof}

\indent In what follows, we derive the probability inequality for the function $f\in \widetilde{\mathcal{S}}_{N,K}\left(\varphi\right)$ using matrix Bernstein inequality in Lemma \ref{lem3.3}. Subsequently, we demonstrate the sampling inequality for $f\in \mathcal{S} _{N,K}\left(\varphi\right)$ in Theorem \ref{thm3.4}.\\
\indent Let $X=\left\{\mathbf{x}_{j},\;j\in \mathbb{N}\right\}$ be a set of independent random variables following a general probability distribution over $\mathcal{E}_K=[-K,K]^2$ with density function $\xi$ satisfies assumption $\textbf{(A.2)}$. Then for any $f\in \widetilde{\mathcal{S}}_{N,K}\left(\varphi\right)$, we define 
\begin{equation} Y_{j}\left(\mathcal{R}_\mathbf{p}\left(f\right)\right)=\left|\mathcal{R}_\mathbf{p}\left(f\left(\mathbf{x}_{j}\right)\right)\right|^2-\int_{\mathcal{E}_K}\xi\left(\mathbf{x}\right) \left|\mathcal{R}_\mathbf{p}\left(f\left(\mathbf{x}\right)\right)\right|^2 d\mathbf{x}.\label{3.4}
\end{equation}
By the above definition, we can see that $\left\{Y_j\left(\mathcal{R}_\mathbf{p}\left(f\right)\right),\;j\in \mathbb{N}\right\}$ is a sequence of independent random variables and its expectation satisfies $\mathbb{E} \left( Y_j\left(\mathcal{R}_{\mathbf{p}}\left( f \right)\right) \right) =0$. 
The matrix Bernstein inequality is crucial in probability theory and statistics, enabling us to derive probabilistic bounds on the norm of the sum of random matrices.
\begin{lem}[Matrix Bernstein inequality \cite{WOS:000306433100001}] Let $X_1,\ldots,X_n$ represent a sequence of independent random self-adjoint matrices of dimension $\mathcal{Q}$. Suppose that each random matrix
satisfies
\begin{equation*}
\mathbb{E}(X_j) = 0\quad and\quad \left\|X_j\right\|\leqslant B,\quad j=1,\ldots,n.
\end{equation*}
Then for all $u>0$, $$\mathbb{P}\left(\lambda_{max}\left(\sum_{j=1}^n X_j\right)\geqslant u\right)\leqslant \mathcal{Q}exp\left(-\frac{u^2 /2}{\sigma^2+Bu/3}\right)$$ holds, where $\lambda_{max}\left(U\right)$ represents the largest singular value of a matrix $U$, $\left\|U\right\|=\Big(\lambda_{max}\left(U^{\mathrm{T}}U\right)\Big)^{\frac{1}{2}}$ denotes the operator norm, and $\sigma^2=\left\|\sum_{j=1}^{n}\mathbb{E}\left(X_j^2\right)\right\|$.\label{lem3.2}
\end{lem}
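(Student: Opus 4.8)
The statement is the classical matrix Bernstein inequality, and the plan is to follow the matrix Laplace-transform (matrix Chernoff) method. Write $S=\sum_{j=1}^{n}X_{j}$. The first step is a matrix Markov bound: for any $\theta>0$, since $x\mapsto e^{\theta x}$ is increasing, the event $\lambda_{max}(S)\geqslant u$ forces $e^{\theta\lambda_{max}(S)}\geqslant e^{\theta u}$, and because $e^{\theta\lambda_{max}(S)}=\lambda_{max}(e^{\theta S})\leqslant\mathrm{tr}\,e^{\theta S}$ for the self-adjoint $S$, Markov's inequality yields
\begin{equation}
\nonumber\mathbb{P}\left(\lambda_{max}(S)\geqslant u\right)\leqslant e^{-\theta u}\,\mathbb{E}\left[\mathrm{tr}\,e^{\theta S}\right].
\end{equation}
Thus it suffices to control the trace moment generating function $\mathbb{E}[\mathrm{tr}\exp(\theta S)]$ and then optimize over $\theta$.

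The key and most delicate step is the subadditivity of the matrix cumulant generating function. Invoking Lieb's concavity theorem (the concavity of $A\mapsto\mathrm{tr}\exp(H+\log A)$ on positive-definite $A$) together with Jensen's inequality, one peels off the independent summands one at a time to obtain
\begin{equation}
\nonumber\mathbb{E}\left[\mathrm{tr}\exp\Big(\theta\sum_{j=1}^{n}X_{j}\Big)\right]\leqslant\mathrm{tr}\exp\Big(\sum_{j=1}^{n}\log\mathbb{E}\,e^{\theta X_{j}}\Big).
\end{equation}
I expect this to be the main obstacle, as it is precisely where the operator-theoretic input (Lieb's theorem) is indispensable: the scalar Bernstein argument has no analogue here, and one cannot simply multiply the individual MGFs because the matrices need not commute.

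It remains to bound each factor under the hypotheses $\mathbb{E}(X_j)=0$ and $\|X_j\|\leqslant B$. Since the scalar function $x\mapsto(e^{\theta x}-\theta x-1)/x^{2}$ is increasing, so that $e^{\theta x}\leqslant1+\theta x+g(\theta)x^{2}$ for $x\leqslant B$ with $g(\theta)=(e^{\theta B}-\theta B-1)/B^{2}$, the transfer rule gives $e^{\theta X_j}\preceq I+\theta X_j+g(\theta)X_j^{2}$ on the spectrum $[-B,B]$ of each $X_j$. Taking expectations and using $I+A\preceq e^{A}$ yields $\log\mathbb{E}\,e^{\theta X_j}\preceq g(\theta)\,\mathbb{E}(X_j^{2})$. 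Summing, and using monotonicity of $\mathrm{tr}\exp$ together with $\mathrm{tr}\exp(A)\leqslant\mathcal{Q}\,\exp(\lambda_{max}(A))$ for positive semidefinite $A$, gives
\begin{equation}
\nonumber\mathbb{P}\left(\lambda_{max}(S)\geqslant u\right)\leqslant\mathcal{Q}\,\exp\left(g(\theta)\sigma^{2}-\theta u\right),
\end{equation}
where $\sigma^{2}=\|\sum_{j}\mathbb{E}(X_j^2)\|=\lambda_{max}(\sum_j\mathbb{E}(X_j^2))$.

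Finally, a term-by-term comparison of power series shows $g(\theta)\leqslant\frac{\theta^{2}/2}{1-\theta B/3}$ for $0<\theta<3/B$, so replacing $g(\theta)$ by this Bernstein surrogate and choosing $\theta=u/(\sigma^{2}+Bu/3)$ produces the exponent $-\frac{u^{2}/2}{\sigma^{2}+Bu/3}$, which completes the argument. Apart from the Lieb-concavity step, every estimate is a routine consequence of the transfer rule and the spectral mapping theorem, so the whole proof reduces to invoking that single deep operator inequality and then carrying out the scalar optimization.
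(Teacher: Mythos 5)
Your proposal is correct, and it reproduces precisely the canonical argument: the paper itself gives no proof of this lemma --- it is quoted verbatim, with citation, from Tropp's work --- and the proof in that cited reference is exactly the matrix Laplace-transform method you outline (matrix Markov bound via $\mathrm{tr}\,e^{\theta S}$, subadditivity of the matrix cumulant generating function via Lieb's concavity theorem, the operator MGF bound $\log\mathbb{E}\,e^{\theta X_j}\preceq g(\theta)\,\mathbb{E}(X_j^2)$ from the transfer rule, the comparison $g(\theta)\leqslant\frac{\theta^2/2}{1-\theta B/3}$, and the choice $\theta=u/(\sigma^2+Bu/3)$). All the individual steps you state are valid, so there is nothing to correct; the only remark worth making is that your reading of $\lambda_{max}$ as the largest eigenvalue of the self-adjoint sum is the right one, even though the paper's statement loosely calls it a ``largest singular value.''
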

The random matrices being studied are generated as follows: For each $j\in\mathbb{N}$ and ${l_m,l_n\in \left\{1,\ldots,\mathcal{Q}\right\}}$, we define the $\mathcal{Q}\times\mathcal{Q}$ random matrix $\varPsi_j$,\begin{equation}
    \left(\varPsi_j\right)_{l_m,l_n}=\mathcal{R}_{\mathbf{p}}\left(\varphi\left(\mathbf{x}_j-\mathbf{k}_{l_m}\right)\right)\overline{\mathcal{R}_{\mathbf{p}}\left(\varphi\left(\mathbf{x}_j-\mathbf{k}_{l_n}\right)\right)}, \label{3.5}
\end{equation}
where $\{\mathbf{x}_j\}$ are independent and identically distributed random variable chosen from $\mathcal{E}_K$. Let
 \begin{equation}
X_j=\varPsi_j-\mathbb{E}\left(\varPsi_j\right).\label{3.6}
\end{equation}
\indent Using Lemma \ref{lem3.2}, we derive the probability inequality for all functions in 
$\widetilde{\mathcal{S}}_{N,K}\left(\varphi\right)$.
\begin{lem}
    Let $\widetilde{\mathcal{S}}_{N,K}\left(\varphi\right)$ be given by (\ref{3.1}) and $X=\left\{\mathbf{x}_j\right\}_{j=1}^n$ be a set of independent random variables derived from a general probability distribution on $\mathcal{E}_K$. The density function $\xi$, along with the constants $C_{\xi,u}$ and $C_{\xi,l}$, satisfies assumption $\mathbf{(A.2)}$. For some $n\in\mathbb{N}$ and $\lambda\geqslant0$, the probability
    \begin{align}
        &\nonumber\mathbb{P}\left(\underset{f\in \widetilde{\mathcal{S}}_{N,K}\left(\varphi\right)}{sup}\left|
        \sum_{j=1}^{n}Y_j\left(\mathcal{R}_\mathbf{p}\left(f\right)\right)\right|\geqslant\lambda\right)\leqslant \mathcal{Q}exp\left(-\frac{m_2^4\lambda^2 }{8K^2C_{\theta}^2M_2^2\left[4K^2C_{\theta}^2C_{\varphi}^2C_{\xi,u}n+\frac{\lambda \left(C_{\varphi}^2+C_{\xi,u}m_2^2\right)}{3}\right]}\right)
    \end{align}
   holds, where $m_2,\,M_2$ are the constants defined in $\mathbf{(A.1)}$ and $C_{\varphi},\;C_{\theta}$ are defined in Lemma \ref{lem3.1}.\label{lem3.3}
\end{lem}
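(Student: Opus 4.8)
The plan is to read $\sum_{j=1}^{n}Y_j(\mathcal{R}_\mathbf{p}(f))$ as a single quadratic form in the coefficient vector $C=(c_{\mathbf{k}_1},\ldots,c_{\mathbf{k}_{\mathcal{Q}}})^{\mathrm{T}}$ of $f$, driven by the random matrices of (\ref{3.5})--(\ref{3.6}). Writing $v_j=\left(\mathcal{R}_\mathbf{p}(\varphi(\mathbf{x}_j-\mathbf{k}_1)),\ldots,\mathcal{R}_\mathbf{p}(\varphi(\mathbf{x}_j-\mathbf{k}_{\mathcal{Q}}))\right)^{\mathrm{T}}$, the matrix in (\ref{3.5}) is $\varPsi_j=v_jv_j^{\mathrm{T}}$, so linearity of the Radon transform gives $|\mathcal{R}_\mathbf{p}(f(\mathbf{x}_j))|^2=C^{\mathrm{T}}\varPsi_jC$ and $\int_{\mathcal{E}_K}\xi|\mathcal{R}_\mathbf{p}(f)|^2=C^{\mathrm{T}}\mathbb{E}(\varPsi_j)C$. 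By (\ref{3.4}) and (\ref{3.6}) this yields $Y_j(\mathcal{R}_\mathbf{p}(f))=C^{\mathrm{T}}X_jC$ and hence $\sum_{j=1}^{n}Y_j(\mathcal{R}_\mathbf{p}(f))=C^{\mathrm{T}}\!\left(\sum_{j=1}^{n}X_j\right)\!C$. Since $\|f\|_{L^2(\mathcal{E}_K)}=1$, assumption $(\mathbf{A.1})$ forces $\|C\|_{\ell^2}\le 1/m_2$, so
$$\sup_{f\in\widetilde{\mathcal{S}}_{N,K}(\varphi)}\left|\sum_{j=1}^{n}Y_j\left(\mathcal{R}_\mathbf{p}\left(f\right)\right)\right|\le\frac{1}{m_2^2}\left\|\sum_{j=1}^{n}X_j\right\|.$$
This is the crucial reduction: the supremum over the infinite signal class is replaced by one operator-norm deviation, to which Lemma \ref{lem3.2} applies with $u=m_2^2\lambda$.

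I would then verify the two hypotheses of Lemma \ref{lem3.2}. Centering $\mathbb{E}(X_j)=0$ is immediate from (\ref{3.6}). For the uniform bound I use $\|X_j\|\le\|\varPsi_j\|+\|\mathbb{E}(\varPsi_j)\|$ and both estimates of Lemma \ref{lem3.1}: setting $g_w=\sum_l w_l\varphi(\cdot-\mathbf{k}_l)$ so that $w^{\mathrm{T}}v_j=\mathcal{R}_\mathbf{p}(g_w(\mathbf{x}_j))$ and rescaling $g_w/\|g_w\|_{L^2}$ into $\widetilde{\mathcal{S}}_{N,K}(\varphi)$ (using $\|g_w\|_{L^2}\le M_2\|w\|_{\ell^2}$), the $L^\infty$ bound (\ref{3.2}) gives $\|\varPsi_j\|=\sup_{\|w\|=1}|\mathcal{R}_\mathbf{p}(g_w(\mathbf{x}_j))|^2\le 4K^2C_\theta^2C_\varphi^2M_2^2/m_2^2$, while the $L^2$ bound (\ref{3.3}) together with $(\mathbf{A.2})$ gives $\|\mathbb{E}(\varPsi_j)\|=\sup_{\|w\|=1}\int_{\mathcal{E}_K}\xi|\mathcal{R}_\mathbf{p}(g_w)|^2\le 4K^2C_\theta^2M_2^2C_{\xi,u}$. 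Summing yields $B=4K^2C_\theta^2M_2^2(C_\varphi^2+C_{\xi,u}m_2^2)/m_2^2$, exactly the bracketed coefficient of $\lambda$ in the target denominator.

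For the variance proxy $\sigma^2=\|\sum_{j=1}^{n}\mathbb{E}(X_j^2)\|$ I would exploit the rank-one structure. Since $\varPsi_j^2=\|v_j\|_{\ell^2}^2\,\varPsi_j$ and, by positivity of the covariance, $0\preceq\mathbb{E}(X_j^2)=\mathbb{E}(\varPsi_j^2)-(\mathbb{E}\varPsi_j)^2\preceq\mathbb{E}(\varPsi_j^2)$, it suffices to bound $\|\mathbb{E}(\varPsi_j^2)\|$. Here the sharper $\ell^1\!\to\!\ell^2$ estimate $\|v_j\|_{\ell^2}^2\le\big(\sum_l|\mathcal{R}_\mathbf{p}(\varphi(\mathbf{x}_j-\mathbf{k}_l))|\big)^2\le(2KC_\theta C_\varphi)^2$ — got by integrating $\sum_l|\varphi|\le C_\varphi$ over the $s$-interval of length $2KC_\theta$ found in Lemma \ref{lem3.1} — gives $\mathbb{E}(\varPsi_j^2)\preceq 4K^2C_\theta^2C_\varphi^2\,\mathbb{E}(\varPsi_j)$, so that $\sigma^2\le n\cdot 4K^2C_\theta^2C_\varphi^2\cdot 4K^2C_\theta^2M_2^2C_{\xi,u}=16K^4C_\theta^4C_\varphi^2M_2^2C_{\xi,u}n$. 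Substituting $u=m_2^2\lambda$, this $\sigma^2$, and this $B$ into Lemma \ref{lem3.2}, and factoring $4K^2C_\theta^2M_2^2$ out of the denominator, reproduces the stated exponent term by term.

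The main obstacle is the variance estimate rather than the final algebra: one must keep $\mathbb{E}(\varPsi_j^2)$ and $(\mathbb{E}\varPsi_j)^2$ ordered as positive semidefinite matrices, and one must bound $\|v_j\|^2$ by the tighter $\ell^1$ route here, not by the looser $L^\infty$ route used for $B$ — only the former removes a spurious $M_2^2/m_2^2$ and leaves the clean $M_2^2$ (instead of $M_2^4/m_2^2$) demanded by the statement. A secondary technical point is that $\sup_f|\cdot|$ controls $\|\sum_j X_j\|$ two-sidedly, so strictly Lemma \ref{lem3.2} is applied to both $\sum_j X_j$ and $-\sum_j X_j$; I expect the resulting factor to be merged into the prefactor $\mathcal{Q}$.
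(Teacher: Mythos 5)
Your proposal is correct and follows essentially the same route as the paper's proof: the same reduction of the supremum over $\widetilde{\mathcal{S}}_{N,K}(\varphi)$ to a quadratic form in $C$ with $\frac{1}{M_2}\leqslant\left\|C\right\|_{\ell^2}\leqslant\frac{1}{m_2}$, the same bound $B=4K^2C_{\theta}^2M_2^2\left(C_{\varphi}^2+C_{\xi,u}m_2^2\right)/m_2^2$, the same variance estimate $\sigma^2\leqslant16K^4C_{\theta}^4C_{\varphi}^2M_2^2C_{\xi,u}n$ via the rank-one identity $\varPsi_j^2=\left\|v_j\right\|_{\ell^2}^2\varPsi_j$ (the paper reaches $\left\|v_j\right\|_{\ell^2}^2\leqslant4K^2C_{\theta}^2C_{\varphi}^2$ by Cauchy--Schwarz rather than your $\ell^1$ route, with the same result), and the same substitution $u=m_2^2\lambda$ into Lemma \ref{lem3.2}. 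Your closing caveat about two-sidedness is in fact a point where you are more careful than the paper, which bounds $\sup_f\bigl|\sum_{j=1}^{n}Y_j\left(\mathcal{R}_\mathbf{p}\left(f\right)\right)\bigr|$ directly by $\frac{1}{m_2^2}\lambda_{max}\bigl(\sum_{j=1}^n X_j\bigr)$ despite the absolute value; handled rigorously as you describe (applying Lemma \ref{lem3.2} to both $\sum_j X_j$ and $-\sum_j X_j$), the prefactor becomes $2\mathcal{Q}$ rather than $\mathcal{Q}$.
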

\begin{proof}
    By the definition of $\varPsi_j$ in (\ref{3.5}), then we derive that
    \begin{align}
    \nonumber\left(\mathbb{E}\left(\varPsi_j\right)\right)_{l_m,l_n}&=\int_{\mathcal{E}_K}\xi\left(\mathbf{x}\right)\mathcal{R}_{\mathbf{p}}(\varphi(\mathbf{x}-\mathbf{k}_{l_m}))\overline{\mathcal{R}_{\mathbf{p}}(\varphi(\mathbf{x}-\mathbf{k}_{l_n}))}d\mathbf{x}\\
    \nonumber&=\int_{\mathcal{E}_K}\sqrt{\xi(\mathbf{x})}\mathcal{R}_{\mathbf{p}}(\varphi(\mathbf{x}-\mathbf{k}_{l_m}))\overline{\sqrt{\xi(\mathbf{x})}\mathcal{R}_{\mathbf{p}}(\varphi(\mathbf{x}-\mathbf{k}_{l_n}))}d\mathbf{x}\\
    \nonumber&=\left<\sqrt{\xi(\mathbf{x})}\mathcal{R}_{\mathbf{p}}(\varphi(\mathbf{x}-\mathbf{k}_{l_m})),\sqrt{\xi(\mathbf{x})}\mathcal{R}_{\mathbf{p}}(\varphi(\mathbf{x}-\mathbf{k}_{l_n}))\right>.
    \end{align}
   Let $C=(c_{\mathbf{k}_1},\ldots,c_{\mathbf{k}_{\mathcal{Q}}})^{\mathrm{T}}$ and $f\in\widetilde{\mathcal{S}}_{N,K}\left(\varphi\right)$ as defined in (\ref{3.1}). Then the following identity holds: $$\left<C,\varPsi_j C\right>=\left|\mathcal{R}_\mathbf{p}\left(f\left(\mathbf{x}_j\right)\right)\right|^2.$$ Similarly,
    \begin{align}
       \nonumber\left<C,\mathbb{E}\left(\varPsi_j\right) C\right>&=\sum_{l_m}\sum_{l_n}c_{\mathbf{k}_{l_m}}\overline{c_{\mathbf{k}_{l_n}}}\overline{(\mathbb{E}\left(\varPsi_j\right))_{l_m,l_n}}\\
       \nonumber&=\sum_{l_m}\sum_{l_n}c_{\mathbf{k}_{l_m}}\overline{c_{\mathbf{k}_{l_n}}}\left<\sqrt{\xi(\mathbf{x})}\mathcal{R}_{\mathbf{p}}(\varphi(\mathbf{x}-\mathbf{k}_{l_m})),\sqrt{\xi(\mathbf{x})}\mathcal{R}_{\mathbf{p}}(\varphi(\mathbf{x}-\mathbf{k}_{l_n}))\right>\\
       \nonumber&=\left<\sqrt{\xi(\mathbf{x})}\sum_{l_m}c_{\mathbf{k}_{l_m}}\mathcal{R}_{\mathbf{p}}(\varphi(\mathbf{x}-\mathbf{k}_{l_m})),\sqrt{\xi(\mathbf{x})}\sum_{l_n}c_{\mathbf{k}_{l_n}}\mathcal{R}_{\mathbf{p}}(\varphi(\mathbf{x}-\mathbf{k}_{l_n}))\right>\\
       \nonumber&=\left\|\sqrt{\xi(\mathbf{x})}\mathcal{R}_{\mathbf{p}}(f(\mathbf{x}))\right\|_{L^2(\mathcal{E}_K)}^2.
    \end{align}
    Thus, it follows from assumption $\mathbf{(A.1)}$, (\ref{3.6}) and $Y_j\left(\mathcal{R}_{\mathbf{p}}(f)\right)$ in (\ref{3.4}) that
    \begin{align}
        \nonumber &\underset{{f\in \widetilde{\mathcal{S}}_{N,K}\left(\varphi\right)}}{sup}\left|\sum_{j=1}^{n}Y_j\left(\mathcal{R}_{\mathbf{p}}(f)\right)\right|\\ \nonumber&=\underset{{f\in \widetilde{\mathcal{S}}_{N,K}\left(\varphi\right)}}{sup}\left|\sum_{j=1}^{n}\left(\left|\mathcal{R}_\mathbf{p}\left(f\left(\mathbf{x}_{j}\right)\right)\right|^2-\int_{\mathcal{E}_K}\xi\left(\mathbf{x}\right) \left|\mathcal{R}_\mathbf{p}\left(f\left(\mathbf{x}\right)\right)\right|^2 d\mathbf{x}\right)\right|\\
        \nonumber&=\underset{\frac{1}{M_2}\leqslant\left\|C\right\|_{\ell^2}\leqslant\frac{1}{m_2}}{sup}\left|\sum_{j=1}^{n}\big( \left< C,\varPsi_jC \right> - \left< C,\mathbb{E}\left(\varPsi_j\right)C \right>\big)\right|\\
        \nonumber&=\underset{\frac{1}{M_2}\leqslant\left\|C\right\|_{\ell^2}\leqslant\frac{1}{m_2}}{sup}\left\|C\right\|_{\ell^2}^2\left|\sum_{j=1}^{n}\left(\frac{\left<C,X_j C\right>}{C^{\mathrm{T}}C}\right)\right|\\
        \nonumber&\leqslant\frac{1}{m_2^2}\lambda_{max}\left(\sum_{j=1}^n X_j\right),
    \end{align}
    where $\lambda_{max}$ stands for the largest eigenvalue of a self-adjoint matrix.
    \par Next, we estimate $\left\|X_j\right\|$. By $f\in\widetilde{\mathcal{S}}_{N,K}\left(\varphi\right)$ in (\ref{3.1}), along with the inequalities (\ref{3.2}) and (\ref{3.3}), we deduce that
    \begin{equation}
        \nonumber\left<C,\varPsi_j C\right>=\left|\mathcal{R}_\mathbf{p}\left(f\left(x_j\right)\right)\right|^2\leqslant \left\|\mathcal{R}_{\mathbf{p}}\left(f\right)\right\|_{L^{\infty}{\left(\mathcal{E}_K\right)}}^2\leqslant4K^2C_{\theta}^2\frac{C_{\varphi}^2}{m_{2}^2}
    \end{equation}
    and
    \begin{equation}
        \left<C,\mathbb{E}\left(\varPsi_j\right) C\right>=\left\|\sqrt{\xi(\mathbf{x})}\mathcal{R}_{\mathbf{p}}(f(\mathbf{x}))\right\|_{L^2(\mathcal{E}_K)}^2\leqslant 4K^2 C_{\theta}^2C_{\xi,u}\left\|f\right\|_{L^2\left(\mathcal{E}_K\right)}^2=4K^2 C_{\theta}^2C_{\xi,u}.\label{NN3.7}
    \end{equation}
    Then by $X_j$ in (\ref{3.6}), we obtain the following estimate
    \begin{equation}
        \nonumber\left|<C,X_j C>\right|=\left|\left<C,\varPsi_j C\right>-\left<C,\mathbb{E}\left(\varPsi_j\right) C\right>\right|\leqslant4K^2C_{\theta}^2\left(\frac{C_{\varphi}^2}{m_{2}^2}+C_{\xi,u}\right).
    \end{equation}
    Therefore, by $\left\|f\right\|_{L^2\left(\mathcal{E}_K\right)}=1$ and $\frac{1}{M_2}\leqslant\left\|C\right\|_{\ell^2}\leqslant\frac{1}{m_2}$, we conclude that
    \begin{equation}
        \nonumber\left\|X_j\right\|\leqslant4K^2C_{\theta}^2 M_2^2\left(\frac{C_{\varphi}^2}{m_{2}^2}+C_{\xi,u}\right).
    \end{equation}
    \par Finally, we estimate $\sigma^2=\left\|\sum_{j=1}^{n}\mathbb{E}\left(X_{j}^2\right)\right\|$ in Lemma \ref{lem3.2}. Actually, from (\ref{3.6}), we obtain 
    \begin{equation}
        \mathbb{E}\left(X_{j}^2\right)=\mathbb{E}\left(\varPsi_{j}^2\right)-\left[\mathbb{E}\left(\varPsi_j\right)\right]^2\leqslant\mathbb{E}\left(\varPsi_{j}^2\right),\label{New3.7}
    \end{equation}
    where
    \begin{align}
        \nonumber\left(\varPsi_{j}^2\right)_{l_m,l_n}&=\sum_{s=1}^{\mathcal{Q}}\left(\varPsi_j\right)_{l_m,l_s}\left(\varPsi_j\right)_{l_s,l_n}\\
        \nonumber&=\sum_{s=1}^{\mathcal{Q}}\mathcal{R}_{\mathbf{p}}(\varphi(\mathbf{x}_j-\mathbf{k}_{l_m}))\overline{\mathcal{R}_{\mathbf{p}}(\varphi(\mathbf{x}_j-\mathbf{k}_{l_s}))}\mathcal{R}_{\mathbf{p}}(\varphi(\mathbf{x}_j-\mathbf{k}_{l_s}))\overline{\mathcal{R}_{\mathbf{p}}(\varphi(\mathbf{x}_j-\mathbf{k}_{l_n}))}\\
        \nonumber&=\sum_{s=1}^{\mathcal{Q}}\left|\mathcal{R}_{\mathbf{p}}(\varphi(\mathbf{x}_j-\mathbf{k}_{l_s}))\right|^2\left(\varPsi_j\right)_{l_m,l_n}.  
    \end{align}
    Moreover, we can see that
\begin{align}
 \nonumber&\sum_{s=1}^{\mathcal{Q}}\left|\mathcal{R}_{\mathbf{p}}(\varphi(\mathbf{x}_j-\mathbf{k}_{l_s}))\right|^2\\
 \nonumber&=\sum_{s=1}^{\mathcal{Q}}\left|\int_{-KC_{\theta}}^{KC_{\theta}}\varphi\left(\mathbf{p}\left(\mathbf{x}_j-\mathbf{k}_{l_s}\right)\cos\theta-s\sin\theta,\mathbf{p}\left(\mathbf{x}_j-\mathbf{k}_{l_s}\right)\sin\theta+s\cos\theta\right)ds\right|^2\\
 \nonumber&\leqslant 2KC_{\theta}\int_{-KC_{\theta}}^{KC_{\theta}}\sum_{s=1}^{\mathcal{Q}}\left|\varphi\left(\mathbf{p}\left(\mathbf{x}_j-\mathbf{k}_{l_s}\right)\cos\theta-s\sin\theta,\mathbf{p}\left(\mathbf{x}_j-\mathbf{k}_{l_s}\right)\sin\theta+s\cos\theta\right)\right|^2ds\\
\nonumber&\leqslant 2KC_{\theta}C_{\varphi}^{2}\int_{-KC_{\theta}}^{KC_{\theta}}1ds\\
\nonumber&\leqslant 4K^2C_{\theta}^2C_{\varphi}^2,
\end{align}
where the first and second inequality are derived from Cauchy-Schwarz inequality and $C_{\varphi}=sup_{\mathbf{x}\in \mathcal{E}_K}\sum_{l=1}^{\mathcal{Q}}{\left|\varphi\left(\mathbf{x}-\mathbf{k}_l\right)\right|}$ in Lemma \ref{lem3.1}. By (\ref{3.6}), (\ref{NN3.7}) and (\ref{New3.7}), we obtain
\begin{align}
\nonumber\sigma^2&=\left\|\sum_{j=1}^{n}\mathbb{E}\left(X_{j}^2\right)\right\|\leqslant\left\|\sum_{j=1}^{n}\mathbb{E}\left(\varPsi_{j}^2\right)\right\|\leqslant4K^2C_{\theta}^2C_{\varphi}^2\left\|\sum_{j=1}^{n}\mathbb{E}\left(\varPsi_{j}\right)\right\|\leqslant16K^4C_{\theta}^4C_{\varphi}^2 M_2^2C_{\xi,u}n.
\end{align}
The lemma follows directly from the matrix Bernstein inequality presented in Lemma \ref{lem3.2}, we conclude that
\begin{align}
        \mathbb{P}\left(\underset{{f\in \widetilde{\mathcal{S}}_{N,K}\left(\varphi\right)}}{sup}\left|\sum_{j=1}^{n}Y_j\left(\mathcal{R}_{\mathbf{p}}(f)\right)\right|\geqslant\lambda\right)\nonumber&\leqslant\mathbb{P}\left(\lambda_{max}\left(\sum_{j=1}^n X_j\right)\geqslant m_2^2\lambda\right)\\
        \nonumber&\leqslant\mathcal{Q}exp\left(-\frac{m_2^4\lambda^2 }{8K^2C_{\theta}^2M_2^2\left[4K^2C_{\theta}^2C_{\varphi}^2C_{\xi,u}n+\frac{\lambda \left(C_{\varphi}^2+C_{\xi,u}m_2^2\right)}{3}\right]}\right).
    \end{align}
\end{proof}
\par In the  following theorem, we will focus on the problem of Radon random sampling inequality which will require the linear independence of $\left\{\mathcal{R}_\mathbf{p}\varphi\left(\cdot-\mathbf{pk}_l\right)\right\}_{l=1}^{\mathcal{Q}}$.

\begin{thm}
    Suppose that $X=\left\{\mathbf{x}_j\right\}_{j=1}^n$ is a sequence of independent \label{thm3.4}random variables derived from a general probability distribution on $\mathcal{E}_K$, with the density function $\xi$ satisfying assumption $\mathbf{(A.2)}$. And the sequence $\left\{\mathcal{R}_\mathbf{p}\varphi\left(\cdot-\mathbf{pk}_l\right)\right\}_{l=1}^{\mathcal{Q}}$ is linearly independent, where $E=\left[\lceil -N-K\rceil,\lfloor K+N\rfloor\right]^{2}\cap \mathbb{Z}^2$ and $\mathcal{Q}$ is the cardinality of $E$. Let $C_{\theta}=\cos\theta+\sin\theta$ with $\theta\in\left[0,\pi/2\right)$. Then for multi-angles $\theta$, there exist constants $C_{1,\mathbf{p}}$, $C_{2,\mathbf{p}}$ satisfying $0<C_{1,\mathbf{p}}\leqslant C_{2,\mathbf{p}}<\infty$ and a constant $\gamma$ satisfying $\frac{2KC_{1,\mathbf{p}}C_{\xi,l}}{M_2^2}>\gamma>0$ such that the sampling inequality
\begin{align}
    \nonumber\left[\frac{2KC_{1,\mathbf{p}}C_{\xi,l}}{M_2^2}-\gamma\right]\left\|f\right\|_{L^2\left(\mathcal{E}_K\right)}^2&\leqslant\frac{1}{n}\sum_{j=1}^n\left|\mathcal{R}_\mathbf{p}\left(f\left(\mathbf{x}_j\right)\right)\right|^2\leqslant \left[\frac{2\sqrt{2}KC_{2,\mathbf{p}}C_{\xi,u}}{m_2^2}+\gamma\right]\left\|f\right\|_{L^2\left(\mathcal{E}_K\right)}^2
\end{align}

    \noindent holds with the probability at least
    \begin{equation}
         1-\epsilon_{\mathcal{Q}}:=1-\mathcal{Q}exp\left(-\frac{nm_2^4\gamma^2 }{8K^2C_{\theta}^2M_2^2\left[4K^2C_{\theta}^2C_{\varphi}^2C_{\xi,u}+\frac{\gamma\left(C_{\varphi}^2+C_{\xi,u}m_2^2\right)}{3}\right]}\right).\label{3.11}
    \end{equation}
\end{thm}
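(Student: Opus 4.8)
The plan is to split the empirical second moment $\frac1n\sum_{j=1}^n|\mathcal{R}_\mathbf{p}(f(\mathbf{x}_j))|^2$ into its sampling mean (a $\xi$-weighted integral) plus a random fluctuation, control the mean by deterministic Riesz-type estimates, and control the fluctuation by the concentration bound of Lemma \ref{lem3.3}. Since both sides of the asserted inequality are homogeneous of degree two in $f$, it suffices to treat normalized $f\in\widetilde{\mathcal{S}}_{N,K}(\varphi)$, i.e. with $\|f\|_{L^2(\mathcal{E}_K)}=1$; the general statement then follows by applying the normalized version to $f/\|f\|_{L^2(\mathcal{E}_K)}$ and multiplying through by $\|f\|_{L^2(\mathcal{E}_K)}^2$. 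For such $f$, the definition of $Y_j(\mathcal{R}_\mathbf{p}(f))$ in (\ref{3.4}) gives the identity $\frac1n\sum_{j=1}^nY_j(\mathcal{R}_\mathbf{p}(f))=\frac1n\sum_{j=1}^n|\mathcal{R}_\mathbf{p}(f(\mathbf{x}_j))|^2-\int_{\mathcal{E}_K}\xi(\mathbf{x})|\mathcal{R}_\mathbf{p}(f(\mathbf{x}))|^2\,d\mathbf{x}$, so the integral term is exactly the deterministic quantity whose two-sided control will produce $C_{1,\mathbf{p}}$ and $C_{2,\mathbf{p}}$.

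First I would bound the mean $\int_{\mathcal{E}_K}\xi(\mathbf{x})|\mathcal{R}_\mathbf{p}(f(\mathbf{x}))|^2\,d\mathbf{x}$. By $(\mathbf{A.2})$ it lies between $C_{\xi,l}\|\mathcal{R}_\mathbf{p}(f)\|_{L^2(\mathcal{E}_K)}^2$ and $C_{\xi,u}\|\mathcal{R}_\mathbf{p}(f)\|_{L^2(\mathcal{E}_K)}^2$. Because $\{\mathcal{R}_\mathbf{p}\varphi(\cdot-\mathbf{pk}_l)\}_{l=1}^{\mathcal{Q}}$ is a finite linearly independent family, the quadratic form $C\mapsto\|\mathcal{R}_\mathbf{p}(f)\|_{L^2(\mathcal{E}_K)}^2$ associated with $f=\sum_l c_{\mathbf{k}_l}\varphi(\cdot-\mathbf{k}_l)$ is positive definite, yielding two-sided Riesz bounds that I would write in the normalized form $2KC_{1,\mathbf{p}}\|C\|_{\ell^2}^2\le\|\mathcal{R}_\mathbf{p}(f)\|_{L^2(\mathcal{E}_K)}^2\le2\sqrt2KC_{2,\mathbf{p}}\|C\|_{\ell^2}^2$, where $C=(c_{\mathbf{k}_1},\dots,c_{\mathbf{k}_{\mathcal{Q}}})^{\mathrm{T}}$ and the factor $2\sqrt2K$ in the upper bound is the diagonal of $\mathcal{E}_K$, i.e. the maximal length of the chord $\{\mathbf{px}=t\}\cap\mathcal{E}_K$. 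Finally, the normalization $\|f\|_{L^2(\mathcal{E}_K)}=1$ with $(\mathbf{A.1})$ forces $\tfrac{1}{M_2}\le\|C\|_{\ell^2}\le\tfrac{1}{m_2}$. Chaining these three facts gives $\frac{2KC_{1,\mathbf{p}}C_{\xi,l}}{M_2^2}\le\int_{\mathcal{E}_K}\xi(\mathbf{x})|\mathcal{R}_\mathbf{p}(f(\mathbf{x}))|^2\,d\mathbf{x}\le\frac{2\sqrt2KC_{2,\mathbf{p}}C_{\xi,u}}{m_2^2}$, precisely the deterministic endpoints in the statement.

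Next I would invoke the concentration estimate. Applying Lemma \ref{lem3.3} with the choice $\lambda=n\gamma$ and simplifying the exponent (one power of $n$ cancels from numerator and denominator) shows that the event $\sup_{f\in\widetilde{\mathcal{S}}_{N,K}(\varphi)}\frac1n\left|\sum_{j=1}^nY_j(\mathcal{R}_\mathbf{p}(f))\right|<\gamma$ holds with probability at least $1-\epsilon_{\mathcal{Q}}$, with $\epsilon_{\mathcal{Q}}$ exactly as in (\ref{3.11}). On this event, every normalized $f$ satisfies $\int_{\mathcal{E}_K}\xi|\mathcal{R}_\mathbf{p}(f)|^2\,d\mathbf{x}-\gamma\le\frac1n\sum_{j=1}^n|\mathcal{R}_\mathbf{p}(f(\mathbf{x}_j))|^2\le\int_{\mathcal{E}_K}\xi|\mathcal{R}_\mathbf{p}(f)|^2\,d\mathbf{x}+\gamma$. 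Substituting the deterministic bounds of the previous paragraph and then undoing the normalization delivers the claimed sampling inequality; the hypothesis $0<\gamma<\frac{2KC_{1,\mathbf{p}}C_{\xi,l}}{M_2^2}$ is exactly what keeps the lower sampling constant strictly positive, hence guarantees a stable set.

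The main obstacle is the lower deterministic estimate $\|\mathcal{R}_\mathbf{p}(f)\|_{L^2(\mathcal{E}_K)}^2\ge2KC_{1,\mathbf{p}}\|C\|_{\ell^2}^2$. The upper estimate is routine, since bounding each chord length by the diagonal reduces it to an elementary inequality. The lower one genuinely uses the assumption that $\{\mathcal{R}_\mathbf{p}\varphi(\cdot-\mathbf{pk}_l)\}_{l=1}^{\mathcal{Q}}$ is linearly independent: without it the quadratic form would be only positive semidefinite and $C_{1,\mathbf{p}}$ could vanish. The delicate point is that the chord length $L(t)$ of $\{\mathbf{px}=t\}\cap\mathcal{E}_K$ degenerates to $0$ near the corners of the square, so one cannot lower-bound $\int|g(t)|^2L(t)\,dt$ by a pointwise chord-length constant times a one-dimensional norm; instead I would argue through the positive definiteness of the Gram matrix of the functions $\mathbf{x}\mapsto\mathcal{R}_\mathbf{p}(\varphi(\mathbf{x}-\mathbf{k}_l))$ on $\mathcal{E}_K$ (using the continuity of $\mathcal{R}_\mathbf{p}\varphi$ to transfer linear independence to a uniform lower bound, as in Theorem \ref{thm2.1}), whose smallest eigenvalue furnishes $2KC_{1,\mathbf{p}}$.
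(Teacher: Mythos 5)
Your proposal is correct and follows the same route as the paper's proof: reduce by homogeneity to the normalized class $\widetilde{\mathcal{S}}_{N,K}(\varphi)$, split the empirical average into the $\xi$-weighted mean $m\left(\mathcal{R}_\mathbf{p}\left(f\right)\right)=\int_{\mathcal{E}_K}\xi\left(\mathbf{x}\right)\left|\mathcal{R}_\mathbf{p}\left(f\left(\mathbf{x}\right)\right)\right|^2d\mathbf{x}$ plus the fluctuation $\frac{1}{n}\sum_{j}Y_j\left(\mathcal{R}_\mathbf{p}\left(f\right)\right)$, control the fluctuation by Lemma \ref{lem3.3} with $\lambda=n\gamma$ (your cancellation of one power of $n$ is exactly how $\epsilon_{\mathcal{Q}}$ in (\ref{3.11}) arises), and sandwich the mean between $\frac{2KC_{1,\mathbf{p}}C_{\xi,l}}{M_2^2}$ and $\frac{2\sqrt{2}KC_{2,\mathbf{p}}C_{\xi,u}}{m_2^2}$ using $(\mathbf{A.2})$, Riesz-type bounds from linear independence, and $\frac{1}{M_2}\leqslant\left\|C\right\|_{\ell^2}\leqslant\frac{1}{m_2}$. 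The one place you diverge is the justification of the deterministic two-sided bound, and there your version is the more careful one. The paper changes variables to $(t,s)$ and writes the integral over $\mathcal{E}_K$ as $2KC_\theta\int_{-KC_\theta}^{KC_\theta}\left|\sum_l c_{\mathbf{k}_l}\mathcal{R}_\mathbf{p}\varphi\left(t-\mathbf{pk}_l\right)\right|^2dt$, as if every chord of the square perpendicular to $\mathbf{p}$ had the full length $2KC_\theta$; it then posits the one-dimensional Riesz bounds (\ref{3.10}) and replaces the trigonometric factor by its extreme values $1$ and $\sqrt{2}$ over the four quadrants. As you observe, the true weight is the chord length $L(t)$ of $\left\{\mathbf{px}=t\right\}\cap\mathcal{E}_K$, which vanishes at $\left|t\right|=KC_\theta$, so $2KC_\theta$ is only an upper bound for the weight: the paper's display is an overestimate rather than an identity, its upper endpoint survives, but its lower endpoint does not literally follow from (\ref{3.10}). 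Your substitute — taking the lower constant from the smallest eigenvalue of the Gram matrix of $\mathbf{x}\mapsto\mathcal{R}_\mathbf{p}\left(\varphi\left(\mathbf{x}-\mathbf{k}_l\right)\right)$ on $\mathcal{E}_K$, positive definite by continuity and the assumed linear independence, and bounding the weight by the diagonal $2\sqrt{2}K$ for the upper endpoint — repairs exactly this step; since the theorem only asserts the existence of constants $0<C_{1,\mathbf{p}}\leqslant C_{2,\mathbf{p}}<\infty$, absorbing the geometry into them is harmless.
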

\begin{proof}
    For $f\in \mathcal{S} _{N,K}\left(\varphi\right)$ in (\ref{1.2}), we note that $f$ fulfills the sampling inequality if and only if $f/{\left\|f\right\|_{L^2\left(\mathcal{E}_K\right)}}$ satisfies the sampling inequality as well. Let $m\left(\mathcal{R}_\mathbf{p}\left(f\right)\right)=\int_{\mathcal{E}_K}\xi\left(\mathbf{x}\right)\left|\mathcal{R}_\mathbf{p}\left(f\left(\mathbf{x}\right) \right)\right|^2d\mathbf{x}$, where $f\in\widetilde{\mathcal{S}}_{N,K}\left(\varphi\right)$ in (\ref{3.1}). We define the event
    \begin{equation}
        \nonumber\mathcal{G}_f=\left\{\underset{f\in\widetilde{\mathcal{S}}_{N,K}\left(\varphi\right)}{sup}\left|\sum_{j=1}^n Y_j\left(\mathcal{R}_\mathbf{p}\left(f\right)\right)\right|\geqslant n\gamma\right\}.
    \end{equation}
    Its complement corresponds to the event
    \begin{equation}
        \overline{\mathcal{G}}_f=nm\left(\mathcal{R}_\mathbf{p}\left(f\right)\right)-n\gamma\leqslant\sum_{j=1}^n\left|\mathcal{R}_\mathbf{p}\left(f\left(\mathbf{x}_j\right)\right)\right|^2\leqslant n\gamma+nm\left(\mathcal{R}_\mathbf{p}\left(f\right)\right).\label{3.8}
    \end{equation}
    As in (\ref{2.1}) and (\ref{2.5}), there exists sequence $\left\{c_{\mathbf{k}_{l}},l=1,\ldots,\mathcal{Q}\right\}$ such that 
    \begin{equation}
        \nonumber f\left(\mathbf{x}\right)=\sum_{l=1}^{\mathcal{Q}}c_{\mathbf{k}_{l}}\varphi\left(\mathbf{x}-\mathbf{k}_l\right),\; \forall\;\mathbf{x}\in \mathcal{E}_K
    \end{equation}
   and consequently,
   \begin{equation}
   \nonumber \mathcal{R}_\mathbf{p}\left(f\left(\mathbf{x}\right)\right)(t)=\sum_{l=1}^{\mathcal{Q}}c_{\mathbf{k}_{l}}\mathcal{R}_\mathbf{p}\left(\varphi\left(\mathbf{x}-\mathbf{k}_l\right)\right)(t).
    \end{equation}
    \indent Next, let $\mathbf{x}=\left(x,y\right)$ and $\mathbf{p}=\left(\cos\theta,\sin\theta\right)$, we will estimate $m\left(\mathcal{R}_\mathbf{p}\left(f\right)\right)$ with $t=\mathbf{px}$, we first consider $\left\|\mathcal{R}_{\mathbf{p}}\left(f\right)\right\|_{L^2{\left(\mathcal{E}_K\right)}}^{2}$.
    \begin{align}
        \nonumber\int_{\mathcal{E}_K}\left|\mathcal{R}_\mathbf{p}\left(f\left(\mathbf{x}\right)\right)\right|^2d\mathbf{x}&=\int_{\mathcal{E}_K}\left|\sum_{l=1}^{\mathcal{Q}}c_{\mathbf{k}_{l}}\mathcal{R}_\mathbf{p}\left(\varphi\left(\mathbf{x}-\mathbf{k}_{l}\right)\right)\right|^2 d\mathbf{x}\\
    
        &=\int_{-K}^K\int_{-K}^K\left|\sum_{l=1}^{\mathcal{Q}}c_{\mathbf{k}_{l}}\mathcal{R}_\mathbf{p}\varphi\left(x\cos\theta+y\sin\theta-\mathbf{pk}_{l}\right)\right|^2 dxdy,\label{3.9}
    \end{align}
    where the second equality is derived from $\mathcal{R}_{\mathbf{p}}\left(\varphi\left(\mathbf{x}\right)\right)\left(t\right)=\mathcal{R}_{\mathbf{p}}\varphi\left(\mathbf{px}\right)$. Due to $t=\mathbf{px}$ and $s=-x \sin\theta+y\cos\theta$, from Lemma \ref{lem3.1}, we know that
    \[
    t,s\in
    \begin{cases}
        \left[-K\left(\sin\theta+\cos\theta\right),K\left(\sin\theta+\cos\theta\right)\right],&\theta\in\left[0,\pi/2\right),\\
        \left[-K\left(\sin\theta-\cos\theta\right),K\left(\sin\theta-\cos\theta\right)\right],&\theta\in\left[\pi/2,\pi\right),\\
        \left[-K\left(-\sin\theta-\cos\theta\right),-K\left(\sin\theta+\cos\theta\right)\right],&\theta\in\left[\pi,3\pi/2\right),\\
        \left[-K\left(\cos\theta-\sin\theta\right),K\left(\cos\theta-\sin\theta\right)\right],&\theta\in\left[3\pi/2,2\pi\right).\\
    \end{cases}
    \]
    Firstly, we continue to calculate (\ref{3.9}). We consider $\theta\in\left[0,\pi/2\right)$,
    \begin{align}
        \nonumber &\int_{-K}^K\int_{-K}^K\left|\sum_{l=1}^{\mathcal{Q}}c_{\mathbf{k}_{l}}\mathcal{R}_\mathbf{p}\varphi\left(x\cos\theta+y\sin\theta-\mathbf{pk}_{l}\right)\right|^2 dxdy\\
        \nonumber &=\int_{-KC_{\theta}}^{KC_{\theta}}\int_{-KC_{\theta}}^{KC_{\theta}}\left|\sum_{l=1}^{\mathcal{Q}}c_{\mathbf{k}_{l}}\mathcal{R}_\mathbf{p}\varphi\left(t-\mathbf{pk}_{l}\right) \right|^2 dtds\\
        \nonumber &=2KC_{\theta}\int_{-KC_{\theta}}^{KC_{\theta}}\left|\sum_{l=1}^{\mathcal{Q}} c_{\mathbf{k}_{l}}\mathcal{R}_\mathbf{p}\varphi\left(t-\mathbf{pk}_{l}\right) \right|^2 dt,
    \end{align}
    \noindent then by the independence of $\left\{\mathcal{R}_\mathbf{p}\varphi\left(\cdot-\mathbf{pk}_{l}\right),l=1,\ldots,\mathcal{Q}\right\}$, we suppose that there exist $0<C_{1,\mathbf{p}}\leqslant C_{2,\mathbf{p}}<\infty$ such that
    \begin{equation}
C_{1,\mathbf{p}}\sum_{l=1}^{\mathcal{Q}}\left|c_{\mathbf{k}_{l}}\right|^2\leqslant \int_{-KC_{\theta}}^{KC_{\theta}}\left|\sum_{l=1}^{\mathcal{Q}}c_{\mathbf{k}_{l}}\mathcal{R}_\mathbf{p}\varphi\left(t-\mathbf{pk}_{l}\right)\right|^2 dt \leqslant C_{2,\mathbf{p}}\sum_{l=1}^{\mathcal{Q}}\left|c_{\mathbf{k}_{l}}\right|^2.\label{3.10}
    \end{equation}
    Therefore, it follows from assumption $\mathbf{(A.2)}$, (\ref{3.10}) and the definition of $m\left(\mathcal{R}_\mathbf{p}\left(f\right)\right)$ that 
    \begin{align} \nonumber2KC_{1,\mathbf{p}}C_{\xi,l}C_{\theta}\sum_{l=1}^{\mathcal{Q}}\left|c_{\mathbf{k}_{l}}\right|^2\leqslant\int_{\mathcal{E}_K}\xi\left(\mathbf{x}\right)\left|\mathcal{R}_\mathbf{p}\left(f\left(\mathbf{x}\right)\right)\right|^2d\mathbf{x}\leqslant2KC_{2,\mathbf{p}}C_{\xi,u}C_{\theta}\sum_{l=1}^{\mathcal{Q}}\left|c_{\mathbf{k}_{l}}\right|^2,
    \end{align}
    where $C_{\theta}=cos\theta+sin\theta,\,\theta\in\left[0,\pi/2\right)$. Then, we summarize the four cases of the angle $\theta$.\\
    \indent When $\theta\in\left[0,\pi/2\right)$, the following inequality holds,
    \begin{align}
    \nonumber2KC_{1,\mathbf{p}}C_{\xi,l}\left(\sin\theta+\cos\theta\right)\sum_{l=1}^{\mathcal{Q}}\left|c_{\mathbf{k}_{l}}\right|^2 &\leqslant\int_{\mathcal{E}_K}\xi\left(\mathbf{x}\right)\left|\mathcal{R}_\mathbf{p}\left(f\left(\mathbf{x}\right)\right)\right|^2d\mathbf{x}\\
    \nonumber&\leqslant2KC_{2,\mathbf{p}}C_{\xi,u}\left(\sin\theta+\cos\theta\right)\sum_{l=1}^{\mathcal{Q}}\left|c_{\mathbf{k}_{l}}\right|^2.
    \end{align}
    \indent when $\theta\in\left[\pi/2,\pi\right)$,  the following inequality holds,
    \begin{align}
        \nonumber2KC_{1,\mathbf{p}}C_{\xi,l}\left(\sin\theta-\cos\theta\right)\sum_{l=1}^{\mathcal{Q}}\left|c_{\mathbf{k}_{l}}\right|^2 &\leqslant\int_{\mathcal{E}_K}\xi\left(\mathbf{x}\right)\left|\mathcal{R}_\mathbf{p}\left(f\left(\mathbf{x}\right)\right)\right|^2d\mathbf{x}\\
        \nonumber&\leqslant2KC_{2,\mathbf{p}}C_{\xi,u}\left(\sin\theta-\cos\theta\right)\sum_{l=1}^{\mathcal{Q}}\left|c_{\mathbf{k}_{l}}\right|^2.
    \end{align}
    \indent When $\theta\in\left[\pi,3\pi/2\right)$,  the following inequality holds,
    \begin{align}
        \nonumber2KC_{1,\mathbf{p}}C_{\xi,l}\left(-\sin\theta-\cos\theta\right)\sum_{l=1}^{\mathcal{Q}}\left|c_{\mathbf{k}_{l}}\right|^2&\leqslant\int_{\mathcal{E}_K}\xi\left(\mathbf{x}\right)\left|\mathcal{R}_\mathbf{p}\left(f\left(\mathbf{x}\right)\right)\right|^2d\mathbf{x}\\
        \nonumber&\leqslant2KC_{2,\mathbf{p}}C_{\xi,u}\left(-\sin\theta-\cos\theta\right)\sum_{l=1}^{\mathcal{Q}}\left|c_{\mathbf{k}_{l}}\right|^2.
    \end{align}
    \indent When $\theta\in\left[3\pi/2,2\pi\right)$,  the following inequality holds,
    \begin{align}
        \nonumber2KC_{1,\mathbf{p}}C_{\xi,l}\left(\cos\theta-\sin\theta\right)\sum_{l=1}^{\mathcal{Q}}\left|c_{\mathbf{k}_{l}}\right|^2&\leqslant\int_{\mathcal{E}_K}\xi\left(\mathbf{x}\right)\left|\mathcal{R}_\mathbf{p}\left(f\left(\mathbf{x}\right)\right)\right|^2d\mathbf{x}\\
        \nonumber&\leqslant2KC_{2,\mathbf{p}}C_{\xi,u}\left(\cos\theta-\sin\theta\right)\sum_{l=1}^{\mathcal{Q}}\left|c_{\mathbf{k}_{l}}\right|^2.
    \end{align}
    Observing these four cases, we can know that the trigonometric function between the brackets at both ends of the inequality can take the minimum value 1 and the maximum value $\sqrt{2}$ in the range of $\theta$.\\ 
    \indent We denote $C=\{c_{\mathbf{k}_l}\}_{l=1}^{\mathcal{Q}}$. By $f\in \widetilde{\mathcal{S}}_{N,K}\left(\varphi\right)$ in (\ref{3.1}) and $(\mathbf{A.1})$, we have $\left\|f\right\|_{L^2\left(\mathcal{E}_K\right)}=1$ and $\frac{1}{M_2}\leqslant\left\|C\right\|_{\ell^2}\leqslant\frac{1}{m_2}$, it follows that $\frac{2KC_{1,\mathbf{p}}C_{\xi,l}}{M_2^2}\leqslant m\left(\mathcal{R}_\mathbf{p}\left(f\right)\right)\leqslant \frac{2\sqrt{2}KC_{2,\mathbf{p}}C_{\xi,u}}{m_2^2}$. For $f\in \mathcal{S} _{N,K}\left(\varphi\right)$ in (\ref{1.2}), we define the event
    \begin{equation}
\begin{split}
\nonumber\widetilde{\mathcal{G}}_f = \Biggl\{ 
\left[\frac{2KC_{1,\mathbf{p}}C_{\xi,l}}{M_2^2} - \gamma\right] \|f\|_{L^2(\mathcal{E}_K)}^2 
    &\leqslant \frac{1}{n}\sum_{j=1}^n \bigl|\mathcal{R}_\mathbf{p}(f(\mathbf{x}_j))\bigr|^2 \leqslant \left[\frac{2\sqrt{2}KC_{2,\mathbf{p}}C_{\xi,u}}{m_2^2} + \gamma\right] \|f\|_{L^2(\mathcal{E}_K)}^2 
\Biggr\}.
\end{split}
\end{equation}
    By the above discussion, we conclude that $\overline{\mathcal{G}}_f\subseteq \widetilde{\mathcal{G}}_f$, where $\overline{\mathcal{G}}_f$ is defined in (\ref{3.8}). By Lemma \ref{lem3.3}, the sampling inequality is consistently satisfied for $f\in \mathcal{S} _{N,K}\left(\varphi\right)$ with the probability
    \begin{align}
    \nonumber\mathbb{P}\left(\widetilde{\mathcal{G}}_f\right)&\geqslant\mathbb{P}\left(\overline{\mathcal{G}}_f\right)=1-\mathbb{P}\left(\mathcal{G}_f \right)
    =1-\epsilon_{\mathcal{Q}}.
    \end{align}
\end{proof}  
\begin{rem}
    Notice that the sequence $\left\{\mathcal{R}_\mathbf{p}\varphi\left(\cdot-\mathbf{pk}_l\right),l=1,\ldots,\mathcal{Q}\right\}$ needs to be linearly independent. We can give an example. Let $\varphi\left(x,y\right)=B_2\left(x+1\right)B_2\left(y+1\right)$, where $B_2\left(x\right)=\chi_{\left(0,1\right]}\ast\chi_{\left(0,1\right]}\left(x\right)$ and $\ast$ is a convolution operation. By Proposition 3.3 in \cite{2023Determination}, we know that $\left\{\mathcal{R}_\mathbf{p}\varphi\left(\cdot-\mathbf{pk}_l\right)\right\}_{l=1}^{\mathcal{Q}}$ is linearly independent. 
\end{rem}

\section{Reconstruction from Radon random sampling in $\mathcal{S} _{N,K}\left(\varphi\right)$}
In this section, we present a sufficient condition of reconstruction in Lemma \ref{lem3.7}, which can be derived from Theorem \ref{thm2.1}. It will be used to demonstrate the major result in Theorem \ref{thm3.8}. Let $\mathcal{Q}$ denote the cardinality of $E$ in (\ref{2.2}). We define the sampling matrix 
\begin{equation}
    U=\left(U_1,\ldots,U_{\mathcal{Q}}\right)\;, U_l=\left(\mathcal{R}_\mathbf{p}\left(\varphi\left(\mathbf{x}_1-\mathbf{k}_l\right)\right),\ldots,\mathcal{R}_\mathbf{p}\left(\varphi\left(\mathbf{x}_n -\mathbf{k}_l\right)\right)\right)^{\mathrm{T}}. \label{3.21}
\end{equation}
\begin{lem}
    For $\sigma>0$, let $X=\left\{\mathbf{x}_j,j=1,\ldots,n\right\}\subseteq \mathcal{E}_K$ be 
 a sampling set that satisfies\label{lem3.7} 
    \begin{equation} \sum_{j=1}^{n}\left|\mathcal{R}_\mathbf{p}f\left(\mathbf{x}_j\right)\right|^2\geqslant\sigma\left\|C\right\|_{\ell^2}^2 .\label{3.12}
    \end{equation} Then for any $f\in \mathcal{S} _{N,K}\left(\varphi\right)$, there exist reconstruction functions $\left\{\varUpsilon_j\left(\mathbf{x}\right)\right\}_{j=1}^n$ such that 
    \begin{equation}
        \nonumber f\left(\mathbf{x}\right)=\sum_{j=1}^{n}\mathcal{R}_\mathbf{p}\left(f\left(\mathbf{x}_j\right)\right)\varUpsilon_j\left(\mathbf{x}\right),\; \forall\;\mathbf{x}\in \mathcal{E}_K,
    \end{equation}
    where $\varUpsilon_j\left(\mathbf{x}\right)=\sum_{l=1}^{\mathcal{Q}}\sum_{i=1}^{\mathcal{Q}}\mathcal{R}_\mathbf{p}\left(\varphi\left(\mathbf{x}_j-\mathbf{k}_i\right)\right)\left(\left(U^{\mathrm{T}}U\right)^{-1}\right)_{il}\varphi\left(\mathbf{x}-\mathbf{k}_l\right)$ and $U$ is define in (\ref{3.21}).
\end{lem}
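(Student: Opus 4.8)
The plan is to read (\ref{3.12}) as a lower sampling bound that forces $U^{\mathrm{T}}U$ to be invertible, then to invert the linear system relating the Radon samples to the coefficient vector, and finally to regroup the result to read off the reconstruction functions $\varUpsilon_j$. First I would record the matrix identity underlying the sampling: writing $C=(c_{\mathbf{k}_1},\ldots,c_{\mathbf{k}_{\mathcal{Q}}})^{\mathrm{T}}$ and using $\mathcal{R}_\mathbf{p}(f(\mathbf{x}_j))=\sum_{l=1}^{\mathcal{Q}}c_{\mathbf{k}_l}\mathcal{R}_\mathbf{p}(\varphi(\mathbf{x}_j-\mathbf{k}_l))$ from (\ref{2.5}), the vector of Radon samples is exactly $\big(\mathcal{R}_\mathbf{p}(f(\mathbf{x}_j))\big)_{j=1}^n=UC$, with $U$ the $n\times\mathcal{Q}$ matrix in (\ref{3.21}). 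Consequently $\sum_{j=1}^n|\mathcal{R}_\mathbf{p} f(\mathbf{x}_j)|^2=\|UC\|_{\ell^2}^2=C^{\mathrm{T}}U^{\mathrm{T}}U C$, so (\ref{3.12}) reads $C^{\mathrm{T}}U^{\mathrm{T}}U C\geqslant\sigma\|C\|_{\ell^2}^2$.

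Next I would deduce invertibility of $U^{\mathrm{T}}U$. Since $\varphi$ is real-valued, $U^{\mathrm{T}}U$ is a real symmetric positive semidefinite $\mathcal{Q}\times\mathcal{Q}$ matrix; were it singular there would exist a nonzero $C_0$ with $U^{\mathrm{T}}U C_0=0$, whence $C_0^{\mathrm{T}}U^{\mathrm{T}}U C_0=0<\sigma\|C_0\|_{\ell^2}^2$, contradicting (\ref{3.12}). Hence $U^{\mathrm{T}}U$ is invertible, in agreement with Theorem \ref{thm2.1}.

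With invertibility in hand I would solve the overdetermined system $\big(\mathcal{R}_\mathbf{p}(f(\mathbf{x}_j))\big)_{j=1}^n=UC$ via the normal equations, obtaining $C=(U^{\mathrm{T}}U)^{-1}U^{\mathrm{T}}\big(\mathcal{R}_\mathbf{p}(f(\mathbf{x}_j))\big)_{j=1}^n$, that is, componentwise $c_{\mathbf{k}_l}=\sum_{i=1}^{\mathcal{Q}}\sum_{j=1}^n\big((U^{\mathrm{T}}U)^{-1}\big)_{li}\,\mathcal{R}_\mathbf{p}(\varphi(\mathbf{x}_j-\mathbf{k}_i))\,\mathcal{R}_\mathbf{p}(f(\mathbf{x}_j))$. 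Substituting this into $f(\mathbf{x})=\sum_{l=1}^{\mathcal{Q}}c_{\mathbf{k}_l}\varphi(\mathbf{x}-\mathbf{k}_l)$ and interchanging the order of summation so as to factor out each sample $\mathcal{R}_\mathbf{p}(f(\mathbf{x}_j))$ yields $f(\mathbf{x})=\sum_{j=1}^n\mathcal{R}_\mathbf{p}(f(\mathbf{x}_j))\varUpsilon_j(\mathbf{x})$ with $\varUpsilon_j$ as claimed, using the symmetry $\big((U^{\mathrm{T}}U)^{-1}\big)_{li}=\big((U^{\mathrm{T}}U)^{-1}\big)_{il}$ to match the printed index order.

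The only genuine subtlety---a point to state carefully rather than a computational obstacle---is the passage from the scalar bound (\ref{3.12}) to invertibility of $U^{\mathrm{T}}U$, which tacitly requires (\ref{3.12}) to hold for every coefficient vector $C$ (equivalently, across $\mathcal{S}_{N,K}(\varphi)$), together with the bookkeeping of the triple sum and the symmetry of $(U^{\mathrm{T}}U)^{-1}$ so that the reconstruction functions coincide verbatim with the stated formula. Everything else is routine linear algebra.
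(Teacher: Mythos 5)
Your proposal is correct and takes essentially the same route as the paper's proof: both write the sample vector as $UC=Y$ with $U$ from (\ref{3.21}), use (\ref{3.12}) (read as holding for every coefficient vector $C$) to conclude that $U^{\mathrm{T}}U$ is invertible, then solve the normal equations $C=\left(U^{\mathrm{T}}U\right)^{-1}U^{\mathrm{T}}Y$ and substitute back into $f=\sum_{l}c_{\mathbf{k}_l}\varphi\left(\cdot-\mathbf{k}_l\right)$ to read off the $\varUpsilon_j$. Your explicit contradiction argument for invertibility, the componentwise bookkeeping with the symmetry of $\left(U^{\mathrm{T}}U\right)^{-1}$, and the remark about the quantification of (\ref{3.12}) merely spell out steps the paper leaves implicit.
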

\begin{proof}
    By $f\left(\mathbf{x}\right)=\sum_{l=1}^{\mathcal{Q}}c_{\mathbf{k}_{l}}\varphi\left(\mathbf{x}-\mathbf{k}_l\right)$ in (\ref{2.1}) and $t=\mathbf{px}$, we obtain 
    \begin{equation}
        \nonumber \mathcal{R}_\mathbf{p}\left(f\left(\mathbf{x}\right)\right)=\sum_{l=1}^{\mathcal{Q}}c_{\mathbf{k}_{l}}\mathcal{R}_\mathbf{p}\left(\varphi\left(\mathbf{x}-\mathbf{k}_l\right)\right),\;\forall\;\mathbf{x}\in \mathcal{E}_K,
    \end{equation}
    where $\mathcal{Q}$ is the cardinality of $E$ in (\ref{2.2}).
    Let $Y=\left(\mathcal{R}_\mathbf{p}\left(f\left(\mathbf{x}_1\right)\right),\ldots,\mathcal{R}_\mathbf{p}\left(f\left(\mathbf{x}_n\right)\right)\right)^{\mathrm{T}} $ and $C=\left(c_{\mathbf{k}_{1}},\ldots,c_{\mathbf{k}_{\mathcal{Q}}}\right)^{\mathrm{T}}$. The matrix form can be written as follows:
\begin{equation}
    UC=Y,\label{3.13}
\end{equation}
where the matrix $U$ is defined in (\ref{3.21}). By (\ref{3.12}), one has $C^{\mathrm{T}} U^{\mathrm{T}} UC\geqslant\sigma\left\|C\right\|_{\ell^2}^2>0$, which implies that the matrix $U^{\mathrm{T}} U$ is invertible. By (\ref{3.13}), we know $C=\left(U^{\mathrm{T}} U\right)^{-1}U^{\mathrm{T}}Y$. Therefore, we obtain
    \begin{align}
        \nonumber f\left(\mathbf{x}\right)=C^{\mathrm{T}}\varPhi=Y^{\mathrm{T}} U\left(U^{\mathrm{T}} U\right)^{-1}\varPhi
        =Y^{\mathrm{T}} \varUpsilon\left(\mathbf{x}\right),
    \end{align}
    where  $\varUpsilon\left(\mathbf{x}\right)=U\left(U^{\mathrm{T}} U\right)^{-1}\varPhi=\left\{\varUpsilon_j\left(\mathbf{x}\right)\right\}_{j=1}^{n}$ and $\varPhi=\left(\varphi\left(\mathbf{x}-\mathbf{k}_{1}\right),\ldots,\varphi\left(\mathbf{x}-\mathbf{k}_{\mathcal{Q}}\right)\right)^{\mathrm{T}}$. Then $f\left(\mathbf{x}\right)$ can be reconstructed by
    \begin{equation}
        \nonumber f\left(\mathbf{x}\right)=\sum_{j=1}^n \mathcal{R}_\mathbf{p}\left(f\left(\mathbf{x}_j\right)\right)\varUpsilon_j\left(\mathbf{x}\right),
    \end{equation}
    where $\varUpsilon_j\left(\mathbf{x}\right)=\sum_{l=1}^{\mathcal{Q}}\sum_{i=1}^{\mathcal{Q}}\mathcal{R}_\mathbf{p}\left(\varphi\left(\mathbf{x}_j-\mathbf{k}_i\right)\right)\left(\left(U^{\mathrm{T}}U\right)^{-1}\right)_{il}\varphi\left(\mathbf{x}-\mathbf{k}_l\right).$
\end{proof}
The theorem that follows provides the formula for reconstructing all functions $f(\mathbf{x})\in \mathcal{S} _{N,K}\left(\varphi\right)$ with high probability.
\begin{thm}
    Let $\mathcal{S} _{N,K}\left(\varphi\right)$ be defined by (\ref{3.1})\label{thm3.8}, and let $X=\left\{\mathbf{x}_j\right\}_{j=1}^n$ be a set of independent random variables derived from the general probability distribution on $\mathcal{E}_K$ with the density function $\xi$ satisfying the assumption $\mathbf{(A.2)}$. The sequence $\left\{\mathcal{R}_\mathbf{p}\varphi\left(\cdot-\mathbf{pk}_l\right)\right\}_{l=1}^{\mathcal{Q}}$ is linearly independent. Then for $C_{1,\mathbf{p}}$, $C_{2,\mathbf{p}}$ satisfying $0<C_{1,\mathbf{p}}\leqslant C_{2,\mathbf{p}}<\infty$ and $\gamma$ satisfying $0<\gamma<\frac{2KC_{1,\mathbf{p}}C_{\xi,l}}{M_2^2}$, there exist reconstruction functions $\left\{\varUpsilon_j\left(\mathbf{x}\right)\right\}_{j=1}^n$ such that for all functions in (\ref{1.2}), the reconstruction formula
    \begin{equation}
        f\left(\mathbf{x}\right)=\sum_{j=1}^n \mathcal{R}_\mathbf{p}\left(f\left(\mathbf{x}_j\right)\right)\varUpsilon_j\left(\mathbf{x}\right),\;\forall\; \mathbf{x}\in \mathcal{E}_K\label{3.14}
    \end{equation}
    holds with probability at least $1-\epsilon_{\mathcal{Q}}$, where 
    $$\epsilon_{\mathcal{Q}}=\mathcal{Q}exp\left(-\frac{nm_2^4\gamma^2 }{8K^2C_{\theta}^2M_2^2\left[4K^2C_{\theta}^2C_{\varphi}^2C_{\xi,u}+\frac{\gamma\left(C_{\varphi}^2+C_{\xi,u}m_2^2\right)}{3}\right]}\right)$$
   in (\ref{3.11}), $\varUpsilon_j\left(\mathbf{x}\right)=\sum_{l=1}^{\mathcal{Q}}\sum_{i=1}^{\mathcal{Q}}\mathcal{R}_\mathbf{p}\left(\varphi\left(\mathbf{x}_j-\mathbf{k}_i\right)\right)\left(\left(U^{\mathrm{T}}U\right)^{-1}\right)_{il}\varphi\left(\mathbf{x}-\mathbf{k}_l\right)$ and $U$ is defined in (\ref{3.21}). 
\end{thm}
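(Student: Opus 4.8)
The plan is to combine the high-probability sampling inequality of Theorem \ref{thm3.4} with the deterministic reconstruction scheme of Lemma \ref{lem3.7}. The two results fit together almost mechanically: Theorem \ref{thm3.4} controls the random sampling sum from below in terms of $\|f\|_{L^2(\mathcal{E}_K)}^2$ on an event of probability at least $1-\epsilon_{\mathcal{Q}}$, while Lemma \ref{lem3.7} produces the explicit reconstruction functions $\varUpsilon_j$ from any lower bound of the form $(\ref{3.12})$. The bridge between the two is the frame condition $(\mathbf{A.1})$, which converts the $L^2$-norm lower bound into the coefficient-norm lower bound demanded by $(\ref{3.12})$.

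First I would invoke Theorem \ref{thm3.4}. With the hypotheses on $X$, on $\xi$, and the linear independence of $\{\mathcal{R}_\mathbf{p}\varphi(\cdot-\mathbf{pk}_l)\}_{l=1}^{\mathcal{Q}}$ in force, and with $\gamma$ chosen in $\left(0,\,2KC_{1,\mathbf{p}}C_{\xi,l}/M_2^2\right)$, the two-sided sampling inequality holds simultaneously for every $f\in\mathcal{S}_{N,K}(\varphi)$ on an event of probability at least $1-\epsilon_{\mathcal{Q}}$, with $\epsilon_{\mathcal{Q}}$ as in $(\ref{3.11})$. On this event I keep only the lower estimate and write $f=\sum_{l=1}^{\mathcal{Q}}c_{\mathbf{k}_l}\varphi(\cdot-\mathbf{k}_l)$ with $C=(c_{\mathbf{k}_1},\ldots,c_{\mathbf{k}_{\mathcal{Q}}})^{\mathrm{T}}$. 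Applying the lower frame bound $\|f\|_{L^2(\mathcal{E}_K)}^2\geqslant m_2^2\|C\|_{\ell^2}^2$ from $(\mathbf{A.1})$ gives
\begin{equation}
\nonumber \sum_{j=1}^n\left|\mathcal{R}_\mathbf{p}\left(f\left(\mathbf{x}_j\right)\right)\right|^2 \geqslant \sigma\left\|C\right\|_{\ell^2}^2,\qquad \sigma:=n\,m_2^2\left[\frac{2KC_{1,\mathbf{p}}C_{\xi,l}}{M_2^2}-\gamma\right].
\end{equation}
The constraint $0<\gamma<2KC_{1,\mathbf{p}}C_{\xi,l}/M_2^2$ is exactly what guarantees $\sigma>0$, so this is precisely hypothesis $(\ref{3.12})$ of Lemma \ref{lem3.7}. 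Invoking that lemma then delivers the reconstruction functions $\varUpsilon_j$ of the stated explicit form and the formula $(\ref{3.14})$, valid on the same event, hence with probability at least $1-\epsilon_{\mathcal{Q}}$.

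The step I expect to carry the real content is the \emph{uniformity} in $f$. Lemma \ref{lem3.7} as stated posits $(\ref{3.12})$ for a single signal $f$, whereas the conclusion requires one fixed family $\{\varUpsilon_j\}_{j=1}^n$ that reconstructs every $f$ at once. The resolution is that the estimate above holds for every coefficient vector $C$ on the good event, which forces $C^{\mathrm{T}}U^{\mathrm{T}}UC\geqslant\sigma\|C\|_{\ell^2}^2$ for all $C$; thus $U^{\mathrm{T}}U$ is positive definite and invertible, and the functions $\varUpsilon_j$ built from $(U^{\mathrm{T}}U)^{-1}$ in $(\ref{3.21})$ depend only on the random matrix $U$ and not on the particular signal. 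Once that observation is in place, the remainder is the bookkeeping of propagating the probability $1-\epsilon_{\mathcal{Q}}$ unchanged from Theorem \ref{thm3.4} through the purely deterministic conclusion of Lemma \ref{lem3.7}; no further probabilistic estimate is needed.
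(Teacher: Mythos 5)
Your proposal is correct and follows essentially the same route as the paper: the paper formalizes exactly your argument as a chain of event inclusions $\mathcal{I}_f\subseteq\mathcal{I}_c\subseteq\mathcal{I}_r$, where $\mathcal{I}_f$ is the sampling inequality of Theorem \ref{thm3.4} with $\alpha_1=n\left[\frac{2KC_{1,\mathbf{p}}C_{\xi,l}}{M_2^2}-\gamma\right]m_2^2$, $\mathcal{I}_c$ is the coefficient-level bound $\alpha_1\left\|C\right\|_{\ell^2}^2\leqslant\left\|UC\right\|_{\ell^2}^2$ obtained via $(\mathbf{A.1})$, and $\mathcal{I}_r$ is the reconstruction event from Lemma \ref{lem3.7}. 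Your uniformity observation (that the bound holding for all $C$ makes $U^{\mathrm{T}}U$ invertible, so the $\varUpsilon_j$ depend only on $U$) is precisely what the paper's intermediate event $\mathcal{I}_c$ encodes.
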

\begin{proof}
    For any $0<\alpha_1<\alpha_2$, we define the following events
    \begin{align}
        \nonumber&\mathcal{I} _{f}=\left\{\frac{\alpha_1}{m_2^2}\left\|f\right\|_{L^2\left(\mathcal{E}_K\right)}^2\leqslant\sum_{j=1}^n \left|\mathcal{R}_\mathbf{p}\left(f\left(\mathbf{x}_j\right)\right)\right|^2\leqslant\frac{\alpha_2}{M_2^2}\left\|f\right\|_{L^2\left(\mathcal{E}_K\right)}^2,\,\forall f\in \mathcal{S} _{N,K}\left(\varphi\right)\right\},\\
        \nonumber&\mathcal{I} _c=\left\{\alpha_1\left\|C\right\|_{\ell^2}^2\leqslant\left\|UC\right\|_{\ell^2}^2\leqslant\alpha_2\left\|C\right\|_{\ell^2}^2\right\},\\
        \nonumber&\mathcal{I} _{r}=\left\{f\left(\mathbf{x}\right)=\sum_{j=1}^n \mathcal{R}_\mathbf{p}\left(f\left(\mathbf{x}_j\right)\right)\varUpsilon_j\left(\mathbf{x}\right),\forall f\in \mathcal{S} _{N,K}\left(\varphi\right)\right\}.
    \end{align}
    By $\mathbf{(A.1)}$, it is easy to verify that $\mathcal{I} _f\subseteq\mathcal{I} _c$. Then by Lemma \ref{lem3.7}, we obtain
    $\mathcal{I} _c\subseteq\mathcal{I} _r$. Thus, one has $\mathcal{I} _f\subseteq\mathcal{I} _c\subseteq\mathcal{I} _r$. By choosing $\alpha_1=n\left[\frac{2KC_{1,\mathbf{p}}C_{\xi,l}}{M_2^2}-\gamma\right]m_2^2$ and $\alpha_2=n\left[\frac{2\sqrt{2}KC_{2,\mathbf{p}}C_{\xi,l}}{m_2^2}+\gamma\right]M_2^2$, we obtain the sampling inequality in Theorem \ref{thm3.4}. Then for all $f\in \mathcal{S} _{N,K}\left(\varphi\right)$, the equation (\ref{3.14}) holds with probability at least $$\mathbb{P}\left(\mathcal{I} _r\right)\geqslant\mathbb{P}\left(\mathcal{I} _c\right)\geqslant\mathbb{P}\left(\mathcal{I} _f\right)\geqslant 1-\epsilon_{\mathcal{Q}}.$$   
\end{proof}

\section{Numerical Test}
    Motivated by the fact that shift-invariant spaces generated by box splines are used in \cite{Entezari20121532} to represent biomedical images in the continuous domain, we perform multiple experiments in a local shift-invariant signal space $\mathcal{S} _{N,K}\left(\varphi\right)$ formed by a positive definite box spline. Let $\varphi\left(x,y\right)=B_2\left(x+1\right)B_2\left(y+1\right)$, where $B_2\left(x\right)=\chi_{\left(0,1\right]}\ast\chi_{\left(0,1\right]}\left(x\right)$ and $\ast$ is a convolution operation. Due to supp$\left(B_2\left(x+1\right)\right)=[-1,1]$, we know supp$\left(\varphi\right)=\left[-1,1\right]^2$. Let $\mathcal{E}_K=\left[-\frac{1}{2},\frac{1}{2}\right]^2$, we can see that $E=\left[-1,1\right]^2\cap\mathbb{Z}^2$.

        

We choose $\mathbf{p}=\left(\cos\theta,\sin\theta\right)$ where $0<\theta<\pi/2$ and $\tan\theta>2$. Then by (\ref{1.1}), we have
    \[
    \mathcal{R}_{\mathbf{p}}\varphi\left(t\right)=
    \begin{cases}
        \frac{\left(\tan\theta-\frac{t}{\cos\theta}\right)\left[\left(\frac{t}{\cos\theta}-\tan\theta-\frac{3}{2}\right)^2+\frac{3}{4}\right]+1}{6\cos\theta \tan^2\theta},&t\in\left(\sin\theta,\cos\theta+\sin\theta\right)\\
        \frac{3\left(\frac{t}{\cos\theta}-\tan\theta\right)^2+\left(\frac{t}{\cos\theta}-\tan\theta\right)^3+3\tan\theta-\frac{3t}{\cos\theta}+1}{6\cos\theta \tan^2\theta},&t\in\left(\sin\theta-\cos\theta,\sin\theta\right)\\
        \frac{\tan\theta-\frac{x}{\cos\theta}}{\cos\theta \tan^2\theta},&t\in\left[\cos\theta,\sin\theta-\cos\theta\right]\\
        \frac{\frac{t}{\cos\theta}\left(\frac{2t^2}{\cos^2\theta}-\frac{6t}{\cos\theta}+3\right)+6\tan\theta-\frac{3t}{\cos\theta}-2}{6\cos\theta \tan^2\theta},&t\in\left[0,\cos\theta\right]\\
        \mathcal{R}_{\mathbf{p}}\varphi\left(-t\right),&t\in\left(-\sin\theta-\cos\theta,0\right]\\
        0,& otherwise.
    \end{cases}
    \]

 We have similar expression for $0<\tan\theta<1$ and $1<\tan\theta<2$. As shown in Figure \ref{Fig3}, we draw the image of $\varphi\left(\mathbf{x}-\mathbf{k}\right)$ with $\mathbf{k}=\left(1,-1\right)$ and $\mathcal{R}_\mathbf{p}\left(\varphi\left(\mathbf{x}-\mathbf{k}\right)\right)$ with $\mathbf{p}$ = [$\frac{5}{13}$, $\frac{12}{13}$]. 

 \begin{figure}[h]
\centering
\includegraphics[scale=0.35]{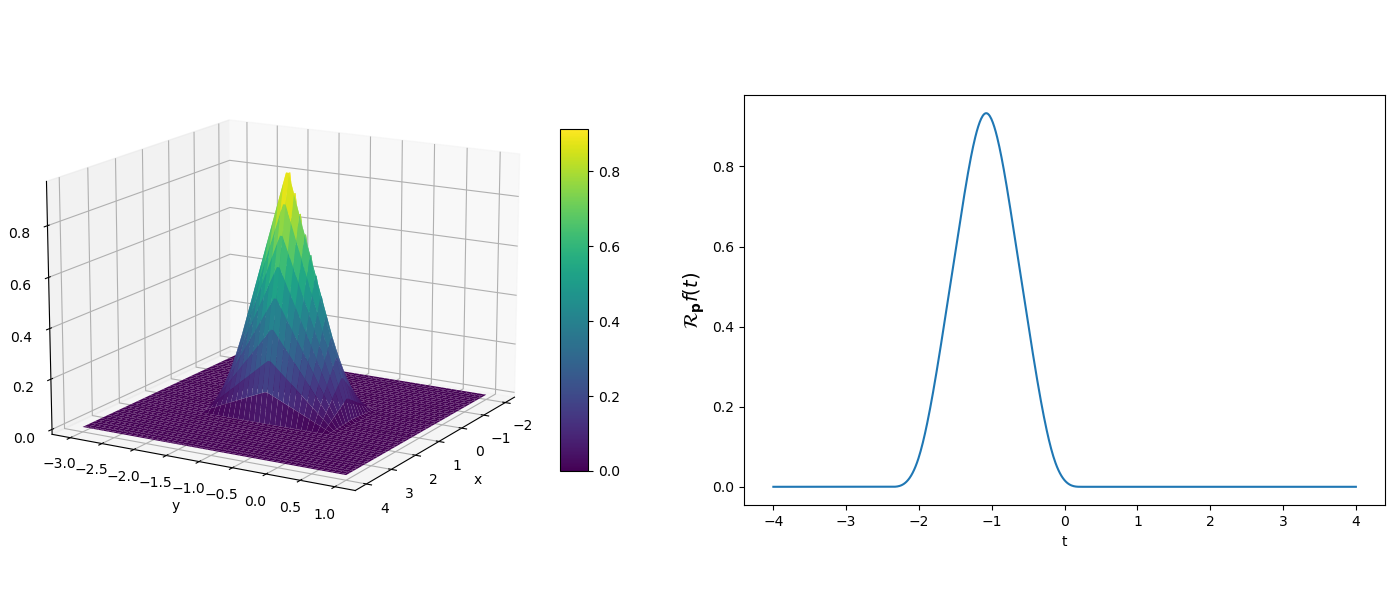}
\caption{Left: the plot of $\varphi\left(\mathbf{x}-\mathbf{k}\right)$ with $\mathbf{k}=\left(1,-1\right)$. Right: the plot of $\mathcal{R}_\mathbf{p}\left(\varphi\left(\mathbf{x}-\mathbf{k}\right)\right)$ with 
$\mathbf{p}$ = [$\frac{5}{13}$, $\frac{12}{13}$].}\label{Fig3}
\end{figure}
  Without bias, we choose $f\in \mathcal{S} _{N,K}\left(\varphi\right)$,$$f\left(\mathbf{x}\right)=\sum_{\mathbf{k}_l=\left(i,j\right)=\left\{-1,0,1\right\}^2}c_{\mathbf{k}_l}\varphi\left(\mathbf{x}-\mathbf{k}_l\right),\,\forall\, \mathbf{x}\in \mathcal{E}_K=\left[-\frac{1}{2},\frac{1}{2}\right]^2,$$ 
\begin{figure}
\centering
\includegraphics[scale=.52]{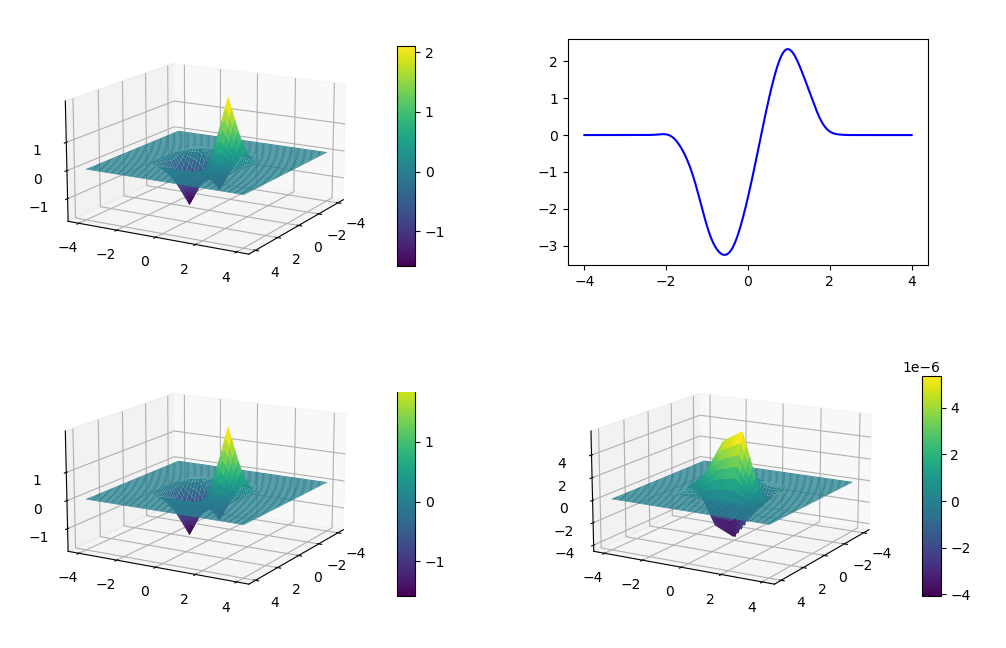}
\caption{Top left: the plot of function $f$. Top right:  Radon transform $\mathcal{R}_{\mathbf{p}}f$ with $\mathbf{p}=\left[\cos\left(\frac{5}{13}\pi\right),\sin\left(\frac{12}{13}\pi\right)\right]$. Bottom left: reconstruction version $\widetilde{f}$ of $f$. Bottom right: the plot of $f-\widetilde{f}$}\label{Fig4}
\end{figure}
\begin{figure}[h]
\centering
\includegraphics[scale=.45]{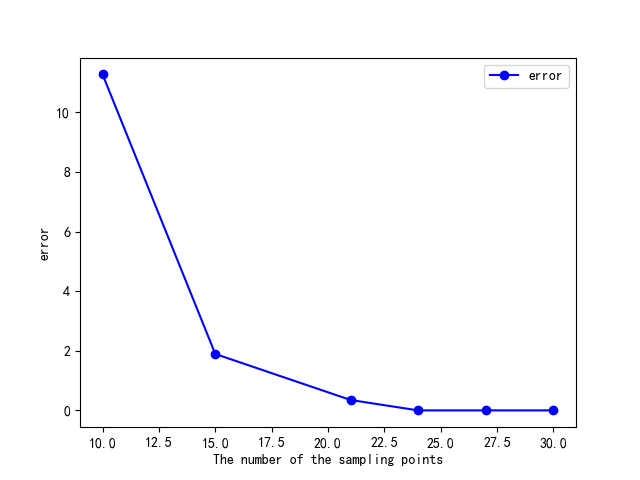}
\caption{Error analysis of varied sampling points}\label{Fig5}
\end{figure}
 \noindent where the coefficient matrix
\begin{equation}
    \nonumber C = \{ c_{i,j} \}_{i,j=0}^{2} = 
    \begin{pmatrix}
        0.1717 & -1.3467 & 0.1075 \\
        -1.7869 & -0.3373 & 2.4782 \\
        -0.8612 & -0.3645 & 0.2011
    \end{pmatrix}.
\end{equation} 
    Then by (\ref{2.5}) and $t=\mathbf{px}$, we have $$\mathcal{R}_{\mathbf{p}}\left(f\left(\mathbf{x}\right)\right)=\sum_{l=1}^{9}c_{\mathbf{k}_l}\mathcal{R}_{\mathbf{p}}\left(\varphi\left(\mathbf{x}-\mathbf{k}_l\right)\right),\,\forall\, \mathbf{x}\in \mathcal{E}_K=\left[-\frac{1}{2},\frac{1}{2}\right]^2,$$where $\left\{\mathbf{k}_{1},\ldots,\mathbf{k}_{9}\right\}=\left\{-1,0,1\right\}^2$ is arranged in the lexicographical order.\\
    \indent Next, we select 30 sampling points $\mathbf{x}$, which are uniformly distributed over the interval $\left[-\frac{1}{2},\frac{1}{2}\right]^2$. Given that $K=\frac{1}{2}$, $N=1$ and $\mathcal{Q}$ is the cardinality of $E$, it follows that the selection of sample points is reasonable. Then the sequence $\{{\widetilde{c}_{\mathbf{k}_l}}\}$ can be determined by (\ref{3.14}).  The following is the error calculation formula:
\begin{align}
    \nonumber error = \frac{\left\|\{c_{\mathbf{k}_l}-\widetilde{c}_{\mathbf{k}_l}\}_{l=1}^{9}\right\|_2}{\left\|\{c_{\mathbf{k}_l}\}_{l=1}^{9}\right\|_2}.
\end{align}

We find that the error gradually decreases as the number of sampling points increases and eventually stabilizes at the order of $10^{-6}$. The recovery of the function $f$ from Radon random samples is shown in Figure \ref{Fig4}. The error analysis is shown in Figure \ref{Fig5}.



\section{Conclusion}

\noindent We address the problem of signal reconstruction from Radon random samples in the local shift-invariant signal space. A critical aspect of this reconstruction process is the identification of a stable sampling set, which ensures that the original function can be accurately recovered. We prove that for a sufficiently large sampling set, there is a high probability that a random selection from a square domain with a general probability distribution will form a stable Radon sampling set. The randomness of the Radon samples allows for the successful application of our proposed reconstruction formula.\\

\appendix



\bibliographystyle{cas-model2-names}


\end{document}